\newtheorem{cor}{Corollary}[section]
\newtheorem{te}[cor]{Theorem}
\newtheorem{p}[cor]{Proposition}
\newtheorem{lemma}[cor]{Lemma}
\newtheorem*{mte}{Main Theorem}
\newtheorem{conjecture}[cor]{Conjecture}
\theoremstyle{definition}
\newtheorem{de}[cor]{Definition}
\newtheorem*{open}{Open problem}
\theoremstyle{remark}
\newtheorem{ex}[cor]{Example}
\newtheorem{nt}[cor]{Notation}
\newcommand{\cz}{\mathbb{C}}
\newcommand{\nz}{\mathbb{N}}
\newcommand{\zz}{\mathbb{Z}}
\newcommand{\ff}{\mathbb{F}}
\newcommand{\bb}{\mathcal{B}}
\newcommand{\pp}{\mathcal{P}}
\newcommand{\nr}{\mathcal{N}}
\newcommand{\vp}{\varepsilon}
\DeclareMathOperator{\card}{Card} 
\DeclareMathOperator{\id}{id} 
\begin{document}

\title{Almost commuting permutations are near commuting permutations}

\author{Goulnara Arzhantseva}
\address[G. Arzhantseva]{Universit\"at Wien, Fakult\"at f\"ur Mathematik\\
Oskar-Morgenstern-Platz 1, 1090 Wien, Austria.}
\email{goulnara.arzhantseva@univie.ac.at}
\author{Liviu P\u aunescu}
\address[L. P\u aunescu]{Institute of Mathematics of the Romanian Academy, 21 Calea Grivitei Street, 010702 Bucharest, Romania}
\email{liviu.paunescu@imar.ro}

\subjclass[2010]{{20Fxx, 20F05, 20B30, 15A27}} \keywords{Residually finite groups, sofic groups, almost commuting matrices, ultraproducts.}

\thanks{
G.A.\ was partially supported by the ERC grant ANALYTIC no.\ 259527. L.P. was partially supported by grant number PN-II-ID-PCE-2012-4-0201 of the Romanian National Authority for Scientific Research. Both authors were partially supported by the Austria-Romania research cooperation grant GALS on Sofic groups}

\begin{abstract}
We prove that the commutator is stable in permutations endowed with the Hamming distance, that is,  two permutations that almost commute are near two commuting permutations.
Our result extends to $k$-tuples of almost commuting permutations, for any given $k$, and allows restrictions, for instance, to even permutations. 
\end{abstract}

\maketitle
\section{Introduction}
A famous open problem asks whether or not two almost commuting matrices are necessarily close to two exactly commuting matrices.
This is considered independently of the matrix sizes and the terms ``almost'' and ``close'' are specified  with respect to a given norm. 
The problem naturally  generalizes to $k$-tuples of almost commuting matrices. It has also a quantifying aspect in estimating the required perturbation and
an algorithmic issue in searching for the commuting matrices whenever they do exist. 

The current literature on this problem, and its operator and $C^*$-algebras variants, is immense. 
The positive answers and counterexamples vary with matrices, matrix norms, and the underlying field,  we are interested in. 
For instance, for self-adjoint complex matrices and the operator norm the problem is due to Halmos~\cite{Halmos}. Its  affirmative solution for pairs of matrices is a major result of Lin~\cite{Lin},  see also~\cite{Lin_short}. A counterexample for triples of self-adjoint matrices was constructed by Davidson~\cite{Dav} and for pairs of unitary matrices, again with respect to 
the operator norm, by Voiculescu~\cite{Voi}, see also~\cite{EL}. For the normalized Hilbert-Schmidt norm on complex matrices, the question was explicitly formulated by Rosenthal~\cite{Ros}. Several 
affirmative and quantitative results for $k$-tuples of self-adjoint, unitary, and normal matrices with respect to this norm have been obtained recently~\cites{Had1, Had2,lev,  FK, FS}.

The problem is also renowned thanks to its connection to physics, originally noticed by von Neumann in his approach to quantum mechanics~\cite{vonN}. 
The commutator equation being an example, the existence of exactly commuting matrices near almost commuting matrices can be viewed in
a wider context of\emph{ stability} conceived by Ulam~\cite{Ulam}*{Chapter VI}:  an equation is stable if an almost solution (or a solution of the corresponding inequality)
is near an exact solution.

Our main result is the \emph{stability of the commutator}  in permutations endowed with the normalized Hamming distance, see Definition~\ref{def:H}  for details on that distance and Definition~\ref{def:stab} for a precise formulation of the notion of stability.

\begin{mte}
For any given $k\geqslant 2$ and with respect to the normalized Hamming distance,
every $k$ (even) permutations that almost commute are near $k$ commuting (respectively, even) permutations.
\end{mte}

The interest to the problem on the stability of the commutator in permutations has appeared very recently in the context of sofic groups~\cite{Gl-Ri}. 
Although, permutation matrices are unitary and the Hamming distance can be expressed using the Hilbert-Schmidt distance\footnote{We have $d_H(p,q)=\frac{1}{2}d_{HS}(A_p,A_q)^2$, where  $p,q\in Sym(n)$, and $A_p,A_q$ denote the corresponding $n\times n$ permutation matrices, $d_H$ is the Hamming distance, and $d_{HS}$ is the Hilbert-Schmidt distance, both normalized.}, the above mentioned
techniques available for unitary matrices, equipped with the Hilbert-Schmidt norm, do not provide successful tools towards the stability of the commutator in permutations. 

Our main theorem is the first stability result for the commutator in permutation matrices. 
A few related questions in permutations have been discussed in \cites{GS, M}.  However, this prior work takes a different viewpoint on commuting permutations
and does not yield any approach to the stability of the commutator. 

 Our proof of the main theorem relies on the ultraproduct technique, in particular, on the Loeb measure space construction introduced in \cites{L}.
 The arguments are valid for $k$-tuples of almost commuting permutations, for any given $k\geqslant 2$, and, under a slight adaptation, for
 $k$-tuples of almost commuting even permutations. 
 
 Stability results are useful to detect a certain rigidity of the corresponding classes of groups. 
 For instance, in~\cite{Gl-Ri} it was shown that a sofic stable (i.e. with a stable system of relator words) group  is residually finite.
We give our proof of this result using the ultraproduct language, see Theorem~\ref{stab_resfin}.   
We then introduce the concept of  \emph{weak stability,} see Definition~\ref{def:ws}. This notion suits better with
the study of metric approximations of groups (hence, in particular, that of sofic groups). It encompasses stability and allows us to
characterize weakly stable groups among amenable groups. This yields a new rigidity result.

\begin{te}[Theorem~\ref{thm:ws} (\ref{p})]\label{thm:p} 
Let $G=\ff_m/\langle R\rangle$ be an amenable group. Then $R$ is weakly stable if and only if $G$ is residually finite.
\end{te}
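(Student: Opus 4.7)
The plan is to prove the two implications separately. The forward direction should follow the scheme of Theorem~\ref{stab_resfin}, exploiting that every amenable group is sofic; the reverse direction is the novel content and needs both amenability and residual finiteness in an essential way.

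\emph{Forward direction.} Suppose $R$ is weakly stable. Since $G$ is amenable it is sofic, so fix a faithful sofic approximation $\phi_k\colon \ff_m \to \mathrm{Sym}(n_k)$ which almost satisfies every relation in $R$. Weak stability provides nearby honest homomorphisms $\psi_k\colon G \to \mathrm{Sym}(m_k)$. The closeness forces $\bigcap_k \ker \psi_k = \{e\}$: any non-trivial $g \in G$ has Hamming weight bounded away from $0$ under the faithful sofic approximation and hence cannot be killed by every $\psi_k$. Since each $\ker\psi_k$ has finite index, residual finiteness follows. In the ultraproduct language of the paper, the faithful sofic approximation gives an embedding $G \hookrightarrow \prod_\omega \mathrm{Sym}(n_k)$, and weak stability upgrades it to a compatible embedding into $\prod_\omega \mathrm{Sym}(m_k)$ via genuine homomorphisms, from which residual finiteness is read off just as in Theorem~\ref{stab_resfin}.

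\emph{Reverse direction.} Suppose $G$ is amenable and residually finite, and let $\phi_k\colon \ff_m \to \mathrm{Sym}(n_k)$ be a sequence of almost-$R$-homomorphisms. Passing to the ultraproduct, these assemble into a genuine homomorphism $\bar\Phi\colon G \to \prod_\omega \mathrm{Sym}(n_k)$ acting by measure-preserving bijections on the Loeb space $X$. The task is to build honest homomorphisms $\psi_k\colon G \to \mathrm{Sym}(m_k)$ whose ultraproduct is Hamming-close to $\bar\Phi$. By residual finiteness, fix a decreasing chain $H_j \lhd G$ of finite-index normal subgroups with $\bigcap_j H_j = \{e\}$, yielding coset actions $\rho_j\colon G \to \mathrm{Sym}(G/H_j)$. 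By amenability of $G$, a Rokhlin / Ornstein--Weiss quasi-tiling of the $G$-space $X$ produces, for each $j$, a measurable partition of almost all of $X$ into tiles on which the restriction of $\bar\Phi$ is measurably conjugate to disjoint copies of $\rho_j$. Transferring this partition back to the finite levels exhibits $\psi_k$ as disjoint copies of $\rho_j$ (with $j$ growing slowly with $k$), and the required Hamming closeness to $\phi_k$ is inherited from the measure-theoretic closeness on $X$.

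\emph{Main obstacle.} The delicate point is the simultaneous use of amenability and residual finiteness to realize the Loeb-space action as a genuine action on a finite symmetric group. Amenability alone supplies only approximate invariance by Folner sets, not honest homomorphisms, while residual finiteness alone supplies honest finite quotients but no mechanism for closeness to the given almost homomorphism. The quasi-tiling on the Loeb space, executed with tiles modelled on a fundamental domain for a finite-index normal subgroup of $G$, is what bridges the two. The freedom to choose the codomain $\mathrm{Sym}(m_k)$ with $m_k$ compatible with the index $[G:H_j]$ is precisely what distinguishes weak stability from strict stability and makes the gluing possible; I expect the verification that this gluing is genuinely a single permutation at each level, rather than a piecewise action across tiles, to be the most technical step.
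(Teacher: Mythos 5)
Your forward direction is correct and is essentially the paper's route: weak stability applied to a sofic approximation of $G$ (which exists since amenable groups are sofic, and whose defining properties make it a $\delta$-strong solution) yields genuine finite quotients separating any fixed $g\neq e$, exactly as in Theorem~\ref{stab_resfin}. The reverse direction is where you diverge, and it contains a genuine gap. The paper's argument is: residual finiteness gives, by explicit amplification of the coset actions $G\curvearrowright G/H_j$ padded with fixed points, a \emph{perfect} sofic representation in the prescribed dimensions $\{n_k\}$ (Proposition~\ref{rf}); amenability then enters only through the Elek--Szab\'o theorem \cite{El-Sz2} that any two sofic representations of an amenable group are conjugate inside $\Pi_{k\to\omega}P_{n_k}$; conjugation preserves perfection, so every sofic representation is perfect (Corollary~\ref{cor:p}), which is equivalent to weak stability by Theorem~\ref{thm:ws}~(\ref{car_te2}). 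You instead try to establish this conjugacy directly by Ornstein--Weiss quasi-tiling of the Loeb space. That is morally what sits inside the Elek--Szab\'o black box, but your key claim --- that the tiling produces tiles ``on which the restriction of $\bar\Phi$ is measurably conjugate to disjoint copies of $\rho_j$'' --- is not correct as stated: tiles are not invariant sets, so the restriction of the action to a tile is not an action, and the Loeb action induced by a sofic representation is essentially free while $\rho_j$ is not, so no measurable conjugacy is possible. Quasi-tiling yields only $\ve$-disjoint tiles with F\o lner shapes; upgrading these to exact tiles shaped like transversals of finite-index subgroups, and regluing the modified piecewise maps into a single permutation of the \emph{same} $n_k$ points, is precisely the hard content you would still have to supply.

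You have also misidentified what separates weak stability from stability. In both definitions the exact solution $\tilde p_1,\ldots,\tilde p_m$ must lie in the same $Sym(n)$ as the given almost solution; there is no freedom in the codomain $Sym(m_k)$. The actual difference is in the hypothesis: a $\delta$-strong solution must additionally satisfy $d_H(\xi(p_1,\ldots,p_m),\id_n)>1-\delta$ for short non-relators $\xi$, which in the ultraproduct says exactly that the induced homomorphism is a sofic \emph{representation} (trace zero off the identity, hence a free Loeb action), so that only sofic representations, rather than arbitrary sofic morphisms, need to be perfected. This is the point at which amenability can be exploited via conjugacy of sofic representations; the dimension bookkeeping you worry about is what Proposition~\ref{rf} handles by amplification.
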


This result allows to provide many explicit examples of finite and infinite systems of relator words which are (not) weakly stable.
We collect some of these new examples in Section~\ref{sec:ex}. 

The relationship between stability and weak stability in permutations is intriguing.
We believe that a sound knowledge of all of the group quotients might be useful. 

\begin{conjecture}\label{conj:stable-weak stable}
A group $G$ is stable whenever every quotient $G/N$ is weakly stable.
\end{conjecture}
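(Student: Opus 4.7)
The natural strategy is to work inside the ultraproduct framework already exploited in the paper. Fix a non-principal ultrafilter $\unit$ and let $\mathcal{S}=\prod_{\unit}\mathrm{Sym}(n_i)$ be the corresponding universal sofic group, equipped with the Hamming length. Writing $G=\ff_m/\langle R\rangle$, stability of $R$ (and hence of $G$) translates into the assertion that every homomorphism $\phi:G\to\mathcal{S}$ is the pointwise ultralimit of a sequence of honest finite permutation representations $\phi_i:G\to\mathrm{Sym}(n_i)$. In the same language, weak stability (Definition~\ref{def:ws}) should amount to the analogous lifting property where the $\phi_i$ are allowed to factor through varying further quotients of $G$; equivalently, $\phi$ admits an ultralimit lift after first replacing $G$ by some quotient $G/M$ with $M\subseteq \ker\phi$.

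With this dictionary in hand, the argument is short. Take an arbitrary $\phi:G\to \mathcal{S}$, set $N=\ker\phi$, and factor $\phi$ through the injection $\bar\phi:G/N\hookrightarrow\mathcal{S}$. By hypothesis $G/N$ is weakly stable, so $\bar\phi$ admits an ultralimit approximation by genuine permutation representations $\bar\phi_i:(G/N)/M_i\to\mathrm{Sym}(n_i)$, where the normal subgroups $M_i\trianglelefteq G/N$ satisfy $\prod_\unit M_i\subseteq \ker\bar\phi=\{1\}$. Composing with the quotient projection $\pi:G\twoheadrightarrow G/N$ (and with $G/N\twoheadrightarrow (G/N)/M_i$) yields honest homomorphisms $\phi_i:G\to\mathrm{Sym}(n_i)$ whose ultralimit in $\mathcal{S}$ is $\bar\phi\circ\pi=\phi$. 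This is exactly the lifting characterising stability.

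The step I expect to be the main obstacle is the verification that, when the map $G/N\to\mathcal{S}$ is already injective, the weak-stability lift cannot genuinely degenerate through a further quotient: one must argue that the kernels $M_i$ become trivial along $\unit$, or at least can be eliminated by a diagonal extraction, so that the resulting $\bar\phi_i$ are in fact bona fide homomorphisms from $G/N$. This should fall out of the precise form of Definition~\ref{def:ws}, but it is the one delicate point. A secondary subtlety is that the hypothesis must be invoked simultaneously over all normal subgroups $N\trianglelefteq G$ as $\phi$ varies; since the conjecture postulates weak stability for \emph{every} quotient, this is exactly what is being assumed, and no uniformity in $N$ is required beyond what the ultraproduct extraction already provides. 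Once these points are settled, the manipulation of quotients and ultralimits sketched above gives the implication.
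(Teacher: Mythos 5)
This statement is posed as a \emph{conjecture} in the paper: no proof is given, and the authors explicitly state that it is unclear whether weak stability implies stability. Your argument does not close the gap. The decisive error is the step where you factor an arbitrary homomorphism $\phi\colon G\to\Pi_{k\to\omega}P_{n_k}$ through the injection $\bar\phi\colon G/N\hookrightarrow\Pi_{k\to\omega}P_{n_k}$ with $N=\ker\phi$ and then invoke weak stability of $G/N$. By Theorem~\ref{thm:ws}~(i), weak stability of $G/N$ only guarantees that every \emph{sofic representation} of $G/N$ is perfect, i.e.\ every homomorphism $\Psi$ with $Tr(\Psi(g))=0$ for all $g\neq e$. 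Injectivity of $\bar\phi$ is a strictly weaker property: an element $g\notin N$ may be sent to a permutation fixing, say, half of the points, so that $Tr(\bar\phi(g))=1/2$ and $\bar\phi$ is not a sofic representation. Equivalently, the finite-level almost-solutions underlying $\bar\phi$ are $\delta$-solutions but not $\delta$-\emph{strong} solutions in the sense of Definition~\ref{def:ws}, so the weak stability hypothesis simply does not apply to them. The obstacle you flag as ``the one delicate point'' is in fact the entire content of the conjecture, and it does not ``fall out of the precise form of Definition~\ref{def:ws}.'' (Your paraphrase of weak stability as a lifting property through auxiliary quotients $M_i$ with $\prod_{\mathcal U}M_i\subseteq\ker\bar\phi$ is also not the paper's definition, and that manipulation has no meaning in the paper's framework.)

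What would actually be needed --- and what the paper carries out only in the special case $G=\zz^n$ (Theorem~\ref{thm:Z}) --- is to pass to the induced action on the Loeb space $X_\omega$, decompose $X_\omega$ according to the point stabilizers, cut $\phi$ by the corresponding diagonal projections, and recognize each piece as a \emph{partial sofic representation} (now satisfying the trace-zero condition) of the quotient by that stabilizer; this is precisely where the hypothesis on ``every quotient'' would enter. But this requires the stabilizer subgroups to be \emph{normal}, so that the pieces are invariant and the quotients are groups. For $\zz^n$ every subgroup is normal; for a general $G$ this fails, and your argument supplies no substitute. Until one finds a way to handle non-normal stabilizers, or some other passage from injective homomorphisms to sofic representations, the implication remains open.
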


The commutator word being a specific example of a relator word, there are other relator words whose (weak)-stability will
be very interesting to determine. The following problem is rather challenging. Given group elements $u$ and $v$, we denote by $[u,v]=uvu^{-1}v^{-1}$
their commutator.

\begin{open}
Is the system of two words $[ab^{-1},a^{-1}ba] \hbox{ and }  [ab^{-1}, a^{-2}ba^2]$ (weakly) stable in permutations?
\end{open}

This system  is  the relator words of a finite presentation of the famous Thompson's group $F$ whose (non)-amenability question enchant
many mathematicians. An affirmative answer to the above problem, together with Theorem~\ref{thm:p} and Theorem~\ref{stab_resfin} (cf. \cite{Gl-Ri}*{Proposition 3})  respectively,
will imply that $F$ is not amenable and even not sofic. Whether or not a non sofic group does exist is a major open problem in the area
of metric approximations of infinite groups.

\section{Sofic groups and ultraproducts}
We begin with definitions from the theory of sofic groups and necessary reminders on the ultraproduct tools. 
We denote by $Sym(n)$ the symmetric group on a set with $n$ elements and by $Alt(n)$ its subgroup of even permutations.

\begin{de}\label{def:H}
For $p,q\in Sym(n)$ the normalized \emph{Hamming distance} is defined by:
\[d_H(p,q)=\frac1n\card\{i:p(i)\neq q(i)\}.\]
\end{de}

\begin{de}
A group $G$ is \emph{sofic} if $\forall$ finite subset $E\subseteq G$, $\forall\vp>0,$ there exists $n\in\nz^*$ and a map $\phi\colon E\to Sym(n)$ such that:
\begin{enumerate}
\item $\forall g,h\in E$ such that $gh\in E$, we have $d_H(\phi(g)\phi(h),\phi(gh))<\vp$;
\item $\forall g\in E$, such that $g\neq e$, we have $d_H(\phi(g),\id)>1-\vp$.
\end{enumerate}
\end{de}

Let $M_n=M_n(\cz)$ be the algebra of complex matrices in dimension $n$. For $a\in M_n$ define $Tr(a)=\frac1n\sum_ia(i,i)$. We identify the group $Sym(n)$ with $P_n\subseteq M_n$, the subgroup of permutation matrices. Observe that $d_H(p,\id)=1-Tr(p)$.

For $a\in M_n$ the \emph{trace norm},  aka the \emph{Frobenius norm}, is $\|a\|_2=\sqrt{Tr(a^*a)}=\sqrt{\frac1n\sum_{i,j}|a(i,j)|^2}$. We now construct the \emph{tracial ultraproduct} of matrix algebras. Let $\{n_k\}_k\subseteq\nz^*$ be a sequence of natural numbers such that $n_k\to\infty$ as $k\to\infty$, and $\Pi_kM_{n_k}$ be the Cartesian product. Let us consider the subset of bounded, in the operator norm $\|\cdot \|$, sequences of matrices: $l^\infty(\nz,M_{n_k})=\{(a_k)_k\in\Pi_kM_{n_k}: \sup_k\|a_k\|<\infty\}$. Let $\omega$ be a non-principal ultrafilter on $\nz$. We consider  $\nr_\omega=\{(a_k)_k\in l^\infty(\nz,M_{n_k}): \lim_{k\to\omega}\|a_k\|_2=0\}$, which is the ideal of $l^\infty(\nz,M_{n_k})$. The tracial ultraproduct of matrix algebras with respect to $\omega$ is defined as 
$\Pi_{k\to\omega}M_{n_k}=l^\infty(\nz,M_{n_k})/\nr_\omega$. The trace is then defined on the ultraproduct by $Tr(a) = \lim_{k\to\omega}Tr(a_k)$, where $a = \Pi_{k\to\omega} a_k\in \Pi_{k\to\omega}M_{n_k}$.

\subsection{The universal sofic group}
Various subsets of the tracial ultraproduct of matrix algebras will appear in this paper, first of which being the \emph{universal sofic group}, $\Pi_{k\to\omega}P_{n_k}\subseteq \Pi_{k\to\omega}M_{n_k}$, introduced by Elek and Szab\'o. 

\begin{te}[Theorem 1, \cite{El-Sz}] A group $G$ is sofic if and only if there exists an injective group homomorphism $\Theta\colon G\hookrightarrow\Pi_{k\to\omega}P_{n_k}$.
\end{te}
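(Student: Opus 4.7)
\emph{Strategy.} My plan is to prove both implications by translating between the $\varepsilon$--$n$ language of soficity and the ultraproduct language via the identity $\|p-q\|_2^2 = 2 d_H(p,q)$, valid for all $p, q \in P_n$ (this is just a per-column computation, cf.\ the footnote on the Hamming/Hilbert--Schmidt correspondence).

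\emph{Forward direction (sofic $\Rightarrow$ embedding).} Assuming $G$ countable (the principal case), I enumerate $G = \{g_i\}_{i \in \nz}$ and for each $k$ set $E_k = \{g_1,\dots,g_k\} \cup \{g_i g_j : i,j \le k\}$ and $\varepsilon_k = 1/k$. Soficity supplies $\phi_k : E_k \to Sym(n_k)$ satisfying conditions (1) and (2). I extend each $\phi_k$ trivially to $G$ and define
\[
\Theta(g) := [(\phi_k(g))_k] \in \Pi_{k\to\omega} P_{n_k}.
\]
Condition (1) gives $\|\phi_k(g)\phi_k(h) - \phi_k(gh)\|_2 \to 0$ along $\omega$ (whenever $g,h,gh$ eventually belong to $E_k$), so $\Theta$ is a homomorphism; condition (2) gives $\|\phi_k(g) - \id\|_2 \to \sqrt{2}$ for $g \ne e$, so $\Theta$ is injective.

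\emph{Reverse direction (embedding $\Rightarrow$ sofic), weak form.} Given an injective $\Theta$ and representatives $\Theta(g) = [\sigma_k^g]_k$, the fact that $\Theta$ is a homomorphism forces $d_H(\sigma_k^g \sigma_k^h, \sigma_k^{gh}) \to 0$ along $\omega$, while injectivity forces $\lim_{k\to\omega} d_H(\sigma_k^g, \id) \ge c_g > 0$ for each $g \ne e$. For any finite $E \subseteq G$ and any $\varepsilon > 0$, a set of $k$ in $\omega$ simultaneously realises all these inequalities, producing $\phi_k := \sigma_k^{\cdot}|_E$ which is already an $\varepsilon$-approximate homomorphism and separates each non-identity element of $E$ from $\id$ by some fixed $c > 0$, but not yet by $1-\varepsilon$.

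\emph{Amplification and main obstacle.} The substantive step is to upgrade this weak separation. I use the diagonal-action trick: the coordinatewise action of $Sym(n)$ on $[n]^N$ defines a genuine group homomorphism $\sigma \mapsto \sigma^{\times N} \in Sym(n^N)$, and $|\mathrm{Fix}(\sigma^{\times N})| = |\mathrm{Fix}(\sigma)|^N$ yields $d_H(\sigma^{\times N}, \id) \ge 1 - (1-c)^N$. Choosing $N$ large enough that $(1-c)^N < \varepsilon$ and composing, one obtains a genuine sofic approximation $E \to Sym(n_k^N)$ with tolerance $\varepsilon$, as the homomorphism $\sigma \mapsto \sigma^{\times N}$ preserves (indeed does not worsen) the approximate homomorphism property of $\phi_k$. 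This amplification is the only non-bookkeeping point in the argument: mere injectivity of $\Theta$ yields $d_H(\Theta(g), \id) > 0$, and the tensor-power construction is precisely what bridges the gap to the $1-\varepsilon$ bound demanded by the sofic definition.
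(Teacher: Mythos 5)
The paper offers no proof of this statement: it is imported verbatim as Theorem~1 of Elek--Szab\'o \cite{El-Sz}, so there is no internal argument to compare yours against. Judged on its own, your reconstruction follows the standard Elek--Szab\'o argument and is essentially correct: the forward direction (for countable $G$, which is the only case your enumeration covers) is routine bookkeeping via $\|p-q\|_2^2=2d_H(p,q)$, and you correctly identify that the entire content of the converse is the amplification from ``$d_H(\Theta(g),\id)>0$'' to ``$d_H>1-\vp$'', which the diagonal action on $[n]^N$ achieves via $|\mathrm{Fix}(\sigma^{\times N})|=|\mathrm{Fix}(\sigma)|^N$.

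One claim in your last paragraph is wrong as stated, though harmlessly so: the homomorphism $\sigma\mapsto\sigma^{\times N}$ does \emph{not} preserve the approximate-homomorphism error. Since $d_H(\alpha^{\times N},\beta^{\times N})=1-(1-d_H(\alpha,\beta))^N\geqslant d_H(\alpha,\beta)$, amplification strictly worsens the defect, inflating an error $d$ to as much as $Nd$. The repair is only a matter of ordering the quantifiers: for a finite $E$ first set $c=\min_{g\in E\setminus\{e\}}c_g>0$, choose $N$ with $(1-c/2)^N<\vp$, and only then pick $k$ in the (ultrafilter-large) set where the multiplication defect is below $\vp/N$ and the separation exceeds $c/2$. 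With that correction the argument closes. You might also note that the forward direction in fact produces more than injectivity, namely $Tr(\Theta(g))=0$ for $g\neq e$ (a sofic representation in the paper's terminology), which is the form actually used elsewhere in the paper.
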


We call a group homomorphism $\Theta\colon G\to\Pi_{k\to\omega}P_{n_k}$ a \emph{sofic morphism} of $G$ and a group homomorphism $\Theta\colon G\hookrightarrow\Pi_{k\to\omega}P_{n_k}$ such that $Tr(\Theta(g))=0$ for all $g\neq e$ in $G$, a \emph{sofic representation} of $G$. Observe that a sofic representation is always injective.

\begin{de}
Two sofic representations $\Theta\colon G\hookrightarrow\Pi_{k\to\omega}P_{n_k}$ and $\Psi\colon G\hookrightarrow\Pi_{k\to\omega}P_{n_k}$  are said to be \emph{conjugate} if there exists $p\in \Pi_{k\to\omega}P_{n_k}$ such that 
$\Theta(g)= p\Psi(g)p^{-1}$ for every $g\in G$.
\end{de}

We shall use the following result of Elek and Szab\'o.
\begin{te}[Theorem 2, \cite{El-Sz2}]\label{thm:ez2} 
A finitely generated group $G$ is amenable if and only if any two sofic representations of $G$ are conjugate.
\end{te}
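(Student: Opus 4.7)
The plan is to work in the \emph{Loeb measure space} framework that this paper is about to introduce. Each sofic representation $\Theta\colon G\hookrightarrow\Pi_{k\to\omega}P_{n_k}$ induces a measure-preserving action of $G$ on the Loeb probability space $(X_\omega,\mu_\omega)$ associated with the ultraproduct of the point sets $\{1,\dots,n_k\}$, and the trace condition $Tr(\Theta(g))=0$ for $g\neq e$ translates precisely into essential freeness of this action. Under this dictionary a conjugating element $p\in\Pi_{k\to\omega}P_{n_k}$ corresponds to a $G$-equivariant measure-preserving isomorphism between the two induced Loeb actions, so the theorem is recast as a classification problem for essentially free pmp $G$-actions on $(X_\omega,\mu_\omega)$.

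For the forward direction, assume $G$ is amenable. The plan is to invoke an ultraproduct version of the Ornstein--Weiss theorem: any two essentially free pmp actions of an amenable group are conjugate by a measure-preserving automorphism of the underlying probability space. Applied to the Loeb actions associated with $\Theta$ and $\Psi$, this yields an abstract measure-space conjugacy. The remaining task is to realize this abstract conjugacy as a genuine ultraproduct $p=\Pi_{k\to\omega}p_k$ of permutations $p_k\in P_{n_k}$; this relies on $\omega$-saturation of the ultraproduct together with the general principle that measurable maps on a Loeb space lift to internal ones. Internally, one would carry out the Ornstein--Weiss argument tile by tile using F{\o}lner sets of $G$ inside each finite symmetric group $Sym(n_k)$ and then pass to the ultralimit.

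For the reverse direction, I would argue by contrapositive: assuming $G$ is non-amenable, construct two sofic representations whose associated Loeb actions are distinguished by a conjugation invariant. A natural invariant is strong ergodicity, equivalently the spectral gap of a random walk on a fixed symmetric generating set of $G$. The idea is to produce, on the one hand, a ``profinite-type'' sofic representation whose Loeb action admits a non-trivial asymptotically invariant sequence of sets, and on the other hand a Bernoulli-amplified sofic representation whose Loeb action has spectral gap precisely because $G$ is non-amenable. Since strong ergodicity is conjugation-invariant, the two representations cannot be conjugate.

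The main obstacle is the forward direction. Establishing the Ornstein--Weiss classification \emph{internally} so that the resulting conjugating automorphism actually belongs to $\Pi_{k\to\omega}P_{n_k}$, rather than merely to the larger group of measure-preserving transformations of the Loeb space, requires an honest F{\o}lner/Rohlin-tower construction at the finite permutation level, and it is exactly here that amenability of $G$ is used in a non-removable way. The reverse direction is comparatively softer once a suitable conjugation invariant (such as strong ergodicity) has been isolated.
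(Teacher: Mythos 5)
The paper offers no proof of this statement: it is imported verbatim as Theorem~2 of Elek and Szab\'o \cite{El-Sz2} and used as a black box (in Corollary~\ref{cor:p}). So the comparison can only be against the known proof of that theorem, and measured that way your sketch has two genuine gaps.

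In the forward direction you invoke ``an ultraproduct version of the Ornstein--Weiss theorem: any two essentially free pmp actions of an amenable group are conjugate by a measure-preserving automorphism.'' No such theorem exists: already for $G=\zz$, Bernoulli shifts of different entropies are free, ergodic, and non-conjugate. What Ornstein--Weiss provides is \emph{orbit equivalence}, which is far too weak to produce the conjugating element $p\in\Pi_{k\to\omega}P_{n_k}$. Moreover, the statement you actually need --- that the Loeb actions arising from sofic representations of an amenable $G$ are conjugate \emph{by an internal permutation} --- is essentially the theorem itself, so passing to the abstract measure-theoretic reformulation is circular. The only workable route is the one you relegate to a parenthetical: a quasi-tiling argument carried out at the level of the finite sets $\{1,\dots,n_k\}$, showing that outside a set of density $o(1)$ every sofic approximation of an amenable group decomposes into translated F{\o}lner tiles, after which two approximations are matched tile by tile by honest permutations $p_k\in P_{n_k}$. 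That internal argument is the entire content of the Elek--Szab\'o proof; the Loeb-space dictionary does not substitute for it.

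In the reverse direction, your invariant (strong ergodicity) is reasonable, but you do not produce two sofic representations on which it takes different values. Bernoulli amplification over a sofic approximation does \emph{not} yield spectral gap for the full Loeb action: the amplified action factors onto the original one (the projection to the base is equivariant), so every asymptotically invariant sequence of the base survives in the amplification. Whether an arbitrary non-amenable sofic group even admits a sofic approximation with spectral gap cannot be taken for granted; it is a delicate matter, not a formal consequence of non-amenability of $G$ (the Losert--Rindler/Schmidt strong ergodicity of the genuine Bernoulli shift does not transfer to the skew product you describe). As it stands, the contrapositive direction asserts the existence of a distinguishing pair of representations without constructing one.
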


\subsection{The Loeb measure space}\label{Loeb space} 
We explain now the construction of the Loeb measure space. This space, introduced in \cites{L}, plays a  crucial role in our approach to the stability phenomenon. We present the Elek and Szegedy description, using ultralimits~\cites{El-Sze}.

Denote by $D_n\subseteq M_n$ the subalgebra of diagonal matrices. Let $X_{n_k}$ be a set with $n_k$ elements, and $\mu_{n_k}$ be the normalized cardinal measure such that $L^\infty(X_{n_k},\mu_{n_k})\simeq (D_{n_k},Tr)$. 

Let $X_\omega=\Pi_kX_{n_k}/\sim_\omega$ be the algebraic ultraproduct. That is, $\Pi_kX_{n_k}$ is the Cartesian product and $(x_k)_k\sim_\omega(y_k)_k$ if and only if $\{k:x_k=y_k\}\in\omega$. For $(x_k)_k\in\Pi_k X_{n_k}$, we denote by $(x_k)_\omega$ its $\sim_\omega$-equivalence class in $X_\omega$.

For a sequence of subsets $A_k\subseteq X_{n_k}$ consider the set $\{(x_k)_k\in\Pi_k X_{n_k}:\{k:x_k\in A_k\}\in\omega\}$. This set is closed under the equivalence relation $\sim_\omega$. 
Then we construct $(A_k)_\omega=\{(x_k)_\omega\in X_\omega:\{k:x_k\in A_k\}\in\omega\}$.  The collection of these sets $\bb_\omega^0=\{(A_k)_\omega:A_k\subset X_{n_k}\}$ is a Boolean algebra of subsets of $X_\omega$.
On $\bb_\omega^0$ we define the measure $\mu_\omega((A_k)_\omega)=\lim_{k\to\omega}\mu_{n_k}(A_k)$. Let $\bb_\omega$ be the completion of $\bb_\omega^0$ with respect to this measure. Then $(X_\omega,\bb_\omega,\mu_\omega)$ is the \emph{Loeb measure space}, a non separable probability space, which we denote briefly by $(X_\omega,\mu_\omega)$.

Note that $L^\infty(X_\omega,\mu_\omega)$ and $(\Pi_{k\to\omega}D_{n_k},Tr)$ are isomorphic as tracial von Neumann algebras. This observation provides an alternative construction of the Loeb measure space, starting from $(\Pi_{k\to\omega}D_{n_k},Tr)$ and using the fact that any abelian von Neumann algebra is isomorphic to $L^\infty(X,\mu)$ for some space with measure $(X,\mu)$.

\subsection{The universal sofic action}\label{universal sofic action}

The group $P_{n_k}$ is acting on $X_{n_k}$ by the definition of the symmetric group. We can construct the Cartesian product action $\Pi_kP_{n_k}\curvearrowright\Pi_kX_{n_k}$. Fix an element 
$\Pi_kp_k\in\Pi_kP_{n_k}$. Its action on $\Pi_kX_{n_k}$ is compatible with the equivalence relation $\sim_\omega$ and preserves the measure $\mu_\omega$. It follows that $\Pi_kp_k$ is acting on $(X_\omega,\mu_\omega)$. Moreover, if $\Pi_kp_k\in\nr_\omega$ then this action is trivial.

A sofic morphism $\Theta\colon G\to\Pi_{k\to\omega}P_{n_k}$ induces an action of the group $G$ on $(X_\omega,\mu_\omega)$, while a sofic representation induces a free action of $G$ on the same space. These actions are a crucial tool in our proof of the Main Theorem.

\section{Stability}

We denote by $\ff_m$ the free group of rank $m$ and by $\{x_1,\ldots, x_m\}$ its free generators. Elements in $\ff_m$ are denoted by $\xi$ and let $R=\{\xi_1,\ldots,\xi_k\}$ be a finite subset of $\ff_m$. Let $\langle R\rangle$ be the normal subgroup generated by $R$ inside $\ff_m$. We set $G=\ff_m/\langle R\rangle$. If $\xi\in\ff_m$, then $\hat\xi\in G$ is the image of $\xi$ under the canonical epimorphism $\ff_m\twoheadrightarrow G$.

\begin{nt}
If $H$ is a group and $p_1,\ldots, p_m\in H$, we denote by $\xi(p_1,\ldots,p_m)\in H$ the image of $\xi$ under the unique group homomorphism $\ff_m\rightarrow H$  such that 
$x_i\mapsto p_i$.
\end{nt}

We now define the notion of stability.

\begin{de}\label{def:stab}
Permutations $p_1,p_2,\ldots,p_m\in Sym(n)$ are a \emph{solution} of $R$ if:
$$\xi(p_1,\ldots,p_m)=\id_n,\ \forall \xi\in R,$$ where $\id_n$ denotes the identity element of $Sym(n).$

Permutations $p_1,p_2,\ldots,p_m\in Sym(n)$ are a $\delta$-\emph{solution} of $R$, for some $\delta>0$, if:
\[d_H(\xi(p_1,\ldots,p_m),\id_n)<\delta,\ \forall \xi\in R.\]

The system $R$ is called \emph{stable} (or \emph{stable in permutations}) if $\forall\ \vp>0\ \exists\ \delta>0\ \forall n\in\nz^*$  $\forall\ p_1,p_2,\ldots,p_m\in Sym(n)$ a $\delta$-solution of $R$, there exist $\tilde p_1,\ldots,\tilde p_m\in Sym(n)$ a solution of $R$ such that $d_H(p_i,\tilde p_i)<\vp$.

The group $G=\ff_m/\langle R\rangle$ is called \emph{stable} if  its set  of relator words $R$ is stable. \end{de}

The definition of stability does not depend on the particular choice of finite presentation of the group:
Tietze transformations preserve stability as the Hamming metric is bi-invariant.

\section{Perfect homomorphisms}

\begin{de}
A (not necessarily injective) group homomorphism  $\Theta\colon G\to\Pi_{k\to\omega}P_{n_k}$ is called \emph{perfect} if there exist $p_k^i\in P_{n_k}$, $i=1,\ldots, m$ such that $\{p_k^1,\ldots, p_k^m\}$ is a solution of $R$ for any $k\in\nz$ and $\Theta(\hat x_i)=\Pi_{k\to\omega}p_k^i$.
\end{de}

\begin{te} \label{car_te}
The set $R$ is stable if and only if any group homomorphism $\Theta\colon G\to\Pi_{k\to\omega}P_{n_k}$  is perfect.
\end{te}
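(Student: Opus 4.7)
The plan is to use the identity $\|p-q\|_2^2 = 2d_H(p,q)$ for $p,q \in P_n$, which converts ``$d_H$-close along $\omega$'' into ``equal in the tracial ultraproduct''. Both implications then become a controlled exchange of quantifiers between pointwise (per $k$) and ultraproduct-level statements.

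For the implication $R$ stable $\Rightarrow$ every $\Theta$ perfect, I fix $\Theta\colon G \to \Pi_{k\to\omega}P_{n_k}$ and choose arbitrary representatives $q_k^i \in P_{n_k}$ of $\Theta(\hat x_i)$. Because $\hat\xi = e$ in $G$ for every $\xi \in R$, the ultraproduct element $\xi(\Pi_{k\to\omega}q_k^i) = \Theta(\hat\xi)$ equals $\id$, hence $\lim_{k\to\omega} d_H(\xi(q_k^1,\ldots,q_k^m),\id) = 0$. I would now run a diagonal construction: pick $\vp_j \searrow 0$, let $\delta_j \searrow 0$ be the moduli furnished by stability, set $B_j = \{k : (q_k^1,\ldots,q_k^m) \text{ is a } \delta_j\text{-solution of } R\}$, and note $B_j \in \omega$ with $B_j \supseteq B_{j+1}$. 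On $B_j \setminus B_{j+1}$ choose $p_k^i$ to be an exact solution $\vp_j$-close in $d_H$ to $q_k^i$ (which exists by stability); on $\bigcap_j B_j$ keep $p_k^i = q_k^i$, since a tuple that is a $\delta_j$-solution for every $j$ is an exact solution; on the complement of $B_1$ put $p_k^i = \id_{n_k}$. For every $\vp > 0$, choosing $j$ with $\vp_j < \vp$ gives $\{k: d_H(q_k^i, p_k^i) < \vp\} \supseteq B_j \in \omega$, so $\Pi_{k\to\omega} p_k^i = \Theta(\hat x_i)$; and by construction every slice is a genuine solution of $R$, proving perfection.

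For the converse I argue by contrapositive. If $R$ is not stable there exist $\vp > 0$, dimensions $n_k$, and tuples $(p_k^1,\ldots,p_k^m) \in Sym(n_k)$ which are $(1/k)$-solutions of $R$ yet admit no exact solution at $d_H$-distance $< \vp$ coordinatewise. A short remark eliminates bounded $n_k$: Hamming values on $Sym(n)$ are multiples of $1/n$, so a $(1/k)$-solution with $k > n_k$ would already be an exact solution, contradicting the choice. Thus, after extracting a subsequence, $n_k \to \infty$; fix any non-principal ultrafilter $\omega$. The relations of $R$ vanish at $(\Pi_{k\to\omega} p_k^i)_i$, so by the universal property of $G = \ff_m / \langle R\rangle$ the assignment $\hat x_i \mapsto \Pi_{k\to\omega} p_k^i$ extends uniquely to a homomorphism $\Theta\colon G \to \Pi_{k\to\omega}P_{n_k}$. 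By hypothesis $\Theta$ is perfect, yielding solutions $(\tilde p_k^1, \ldots, \tilde p_k^m)$ with $\lim_{k\to\omega} d_H(p_k^i, \tilde p_k^i) = 0$. Then the set of $k$ for which $d_H(p_k^i, \tilde p_k^i) < \vp$ for all $i$ lies in $\omega$, hence is non-empty, contradicting the choice of $p_k^i$.

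The main obstacle is the diagonal step in the forward direction: one must combine, across different $k$, the $\vp_j$-perturbations provided by stability on the $\omega$-large sets $B_j$ into a single sequence $(p_k^i)_k$ whose ultraproduct class remains $\Theta(\hat x_i)$. This works precisely because stability supplies a modulus $\delta(\vp)$ \emph{uniform in the dimension} $n$; without this uniformity the construction would collapse. The remaining technical points (well-definedness of $\Theta$ in the converse and the $n_k \to \infty$ reduction) are routine but worth stating explicitly.
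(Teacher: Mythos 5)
Your proposal is correct and follows essentially the same route as the paper: translate the relations of $\Theta$ into $d_H$-smallness along $\omega$, apply the dimension-uniform modulus $\delta(\vp)$ via a diagonal/nested-sets argument to build exact solutions in each coordinate, and prove the converse by packaging a failing sequence of $\delta_k$-solutions into a non-perfect homomorphism. The paper states these two steps in one line each; your version merely supplies the details (the sets $B_j\in\omega$, the $n_k\to\infty$ reduction) that the authors leave implicit.
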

\begin{proof}
Let $\Theta(\hat x_i)=\Pi_{k\to\omega}q_k^i$ for $i=1,\ldots, m$.  We have 
$\lim_{k\to\omega}\|\xi(q_k^1, \ldots, q_k^m)-\id\|_2=0$ for all $\xi\in R$ as $\Theta$ is a homomorphism.
This is equivalent to $\lim_{k\to\omega}d_H(\xi(q_k^1, \ldots, q_k^m), \id)=0$. 
We apply then the definition of stability to $\{q_k^1,\ldots,q_k^m\}$ to construct the required permutations.

For the reverse implication, assume that $R$ is not stable. Then there exists $\vp>0$ such that for any $\delta>0$ there exist $n\in\nz^*$ and $p_1,p_2,\ldots,p_m\in Sym(n)$ a $\delta$-solution of $R$ such that for each $\tilde p_1,\ldots,\tilde p_m\in Sym(n)$ a solution of $R$ we have $\sum_id_H(p_i,\tilde p_i)\geqslant\vp$. By choosing a sequence $\delta_k\to 0$, we can construct a group homomorphism $\Theta\colon G\to\Pi_{k\to\omega}P_{n_k}$ that does not satisfy the requirement of being perfect.
\end{proof}

Observe that this theorem remains valid, up to a clear reformulation of the concepts involved, for \emph{any} metric approximation of $G$.

The following result was proved in \cite{Gl-Ri}*{Proposition 3} using a different terminology.
Our definition of stability is equivalent to the one used in that paper (although, constants $\varepsilon$ and $\delta$ play reverse roles in these two definitions).

\begin{te}\label{stab_resfin}
Let $\Theta\colon G\to\Pi_{k\to\omega}P_{n_k}$ be a perfect injective homomorphism. Then $G$ is residually finite.
\end{te}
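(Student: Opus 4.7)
The plan is to use the perfectness of $\Theta$ to manufacture, for each $k$, an honest finite-dimensional representation of $G$ and then exploit injectivity of $\Theta$ to separate non-identity elements.

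First I would use the hypothesis that $\Theta$ is perfect: pick permutations $p_k^1,\ldots,p_k^m\in P_{n_k}$ with $\Theta(\hat x_i)=\Pi_{k\to\omega}p_k^i$ such that $\{p_k^1,\ldots,p_k^m\}$ is a genuine solution of $R$ for every $k\in\nz$. Because every relator $\xi\in R$ is satisfied on the nose by these permutations, the universal property of $G=\ff_m/\langle R\rangle$ yields a well-defined group homomorphism $\phi_k\colon G\to Sym(n_{k})$ sending $\hat x_i\mapsto p_k^i$. Thus $\Theta$ is literally the ultraproduct of the finite-image homomorphisms $\phi_k$.

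Next, to verify residual finiteness, fix $g\in G$ with $g\neq e$. Injectivity of $\Theta$ gives $\Theta(g)\neq\id$ in $\Pi_{k\to\omega}P_{n_k}$, which by the definition of the tracial ultraproduct means $\lim_{k\to\omega}\|\phi_k(g)-\id\|_2>0$, equivalently $\lim_{k\to\omega}d_H(\phi_k(g),\id_{n_k})>0$. In particular the set $\{k:\phi_k(g)\neq\id_{n_k}\}$ belongs to $\omega$ and hence is non-empty, so there exists an index $k=k(g)$ for which $\phi_{k(g)}(g)\neq\id_{n_{k(g)}}$. As $Sym(n_{k(g)})$ is a finite group, this exhibits a homomorphism of $G$ onto a finite group that does not kill $g$. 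Since $g\neq e$ was arbitrary, $G$ is residually finite.

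I do not expect a genuine obstacle: the whole argument is just unpacking the ultraproduct. The only point to be careful about is that perfectness gives solutions of $R$ for \emph{every} $k$ (not merely $\omega$-almost every $k$), which is exactly what makes each $\phi_k$ a well-defined homomorphism; the ultraproduct description then converts injectivity of $\Theta$ into the separation property needed for residual finiteness.
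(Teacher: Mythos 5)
Your proposal is correct and follows essentially the same route as the paper: perfectness yields genuine homomorphisms $\phi_k\colon G\to Sym(n_k)$ for every $k$, and injectivity of $\Theta$ forces some $\phi_k$ to be nontrivial on any given $g\neq e$. The paper phrases this last step as a contrapositive (if all $\theta_k(g)=\id$ then $\Theta(g)=\id$), while you unpack it via the ultrafilter; the content is identical.
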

\begin{proof}
Let $p_k^i\in P_{n_k}$, $i=1,\ldots, m$, such that $\{p_k^1,\ldots, p_k^m\}$ is a solution of $R$ for any $k\in\nz$ and $\Theta(\hat x_i)=\Pi_{k\to\omega}p_k^i$. Then $\theta_k(\hat x_i)=p_k^i$ defines a homomorphism $\theta_k\colon G\to P_{n_k}$. Fix $g\in G$. If $\theta_k(g)=\id$ for all $k$, it follows that $\Theta(g)=\id$. This contradicts the injectivity of $\Theta$
whenever $g\not=e$.
\end{proof}

\section{Partial sofic representations}

For the proof of our main result we cut a sofic representation $\Theta\colon G\to\Pi_{k\to\omega}P_{n_k}$ by a commuting projection 
$\tilde a\in\Pi_{k\to\omega}D_{n_k}$, where $D_n\subseteq M_n$ is the subalgebra of diagonal matrices. This construction was used in \cite{Pa}, but we formalise these objects a little differently here.

For a $*$-algebra $A$, we denote by $\pp(A)$ the set of projections in $A$, $$\pp(A)=\{a\in A:a^2=a=a^*\}.$$ For instance, $\pp(D_n)$ is the set of diagonal matrices with only $0$ and $1$ entries.
The Cartesian product $\Pi_kM_{n_k}$ is an algebra with pointwise addition and multiplication. For an element $a\in l^\infty(\nz,M_{n_k})\subseteq \Pi_kM_{n_k}$, we denote by $\tilde a$ its image under the canonical projection onto $\Pi_{k\to\omega}M_{n_k}$.

\begin{de}
A \emph{partial permutation matrix} $p\in M_n$ is a matrix with $0$ and $1$ entries for which there exists $S\subseteq\{1,\ldots,n\}$ such that $p$ has exactly one non-zero entry (which is equal to $1$) on each row and column in $S$ and it is $0$ elsewhere. Alternatively, $p=qa$, where $q\in P_n$, $a\in\pp(D_n)$, and $qa=aq$. \end{de}

We denote by $PP_n^a\subseteq M_n$ the set of all partial permutation matrices associated to $a\in\pp(D_n)$.
The set $PP_n^a$ is a subgroup of $M_n$ isomorphic to $P_{nTr(a)}$.
%
We construct the group $\Pi_{k\to\omega}PP_{n_k}^{a_k}$ as a subgroup in $\Pi_{k\to\omega}M_{n_k}$. The identity in $\Pi_{k\to\omega}PP_{n_k}^{a_k}$ is $\tilde a$, where $a=\Pi_ka_k$.

\begin{de}
A group homomorphism $\Theta\colon G\to\Pi_{k\to\omega}PP_{n_k}^{a_k}$ is called a \emph{partial sofic morphism}. 
\end{de}

\begin{de}
A \emph{partial sofic representation} is a partial sofic morphism $\Theta\colon G\to\Pi_{k\to\omega}PP_{n_k}^{a_k}$ such that  $Tr(\Theta(g))=0$ for all $g\neq e$ in $G$.
\end{de}

\begin{de}
Let $\Psi\colon G\to\Pi_{k\to\omega}P_{n_k}$ be a group homomorphism, $\Psi=\Pi_{k\to\omega}\psi_k$. Let $a=\Pi_ka_k\in\pp(\Pi_k D_{n_k})$ be such that $\tilde a$ commutes with $\Psi$. Define  a partial sofic morphism $a\cdot\Psi$ as follows:\ $a\cdot\Psi\colon G\to\Pi_{k\to\omega}PP_{n_k}^{a_k}$, $a\cdot\Psi=\Pi_{k\to\omega}a_k\psi_k$.
\end{de}

It can be easily checked that any partial sofic morphism $\Theta\colon G\to\Pi_{k\to\omega}PP_{n_k}^{a_k}$ is obtained in this way: $\Theta= a\cdot\Psi$, where $\Psi\colon G\to\Pi_{k\to\omega}P_{n_k}$ is a group homomorphism and $\tilde a$ commutes with $\Psi$. Also, $\tilde a=\Theta(e)$.
In addition, any partial sofic representation $\Theta$ is a product $a\cdot\Psi$, where $\Psi\colon G\to\Pi_{k\to\omega}P_{n_k}$ is a sofic representation. We do not use these observations for the proof of the main result.

A partial sofic morphism/representation can be viewed as a usual sofic morphism/represen\-tation with some extra unused space, filled with $0$ entries. Thus, it makes sense to speak about a perfect partial sofic morphism. We cut partial sofic morphisms/representations by elements in $\Pi_kD_{n_k}$, instead of $\Pi_{k\to\omega}D_{n_k}$, as the property of being perfect may depend 
on the actual choice of $a=\Pi_ka_k\in\pp(\Pi_kD_{n_k})$, not only on its class $\tilde a$.

\begin{de}
A partial sofic morphism $a\cdot\Theta\colon G\to\Pi_{k\to\omega}PP_{n_k}^{a_k}$ is  called \emph{perfect} if there exists  $p_k^i\in PP_{n_k}^{a_k}$, $i=1,\ldots, m$ such that $\{p_k^1,\ldots, p_k^m\}$ is a solution of $R$ for every $k\in\nz$ and $a\cdot \Theta(\hat x_i)=\Pi_{k\to\omega}p_k^i$.
\end{de}

\begin{te}\label{partial perfect}
Let $\Theta\colon G\to\Pi_{k\to\omega}P_{n_k}$ be a group homomorphism. Let $\{a^j\}_j\subseteq\pp(\Pi_k D_{n_k})$ be a sequence of projections such that $\sum_ja^j=\id$ and $\tilde a^j\cdot\Theta=\Theta\cdot\tilde a^j$ for each $j$. If $a^j\cdot\Theta$ is a perfect partial sofic morphism for every $j$, then $\Theta$ is a perfect sofic morphism.
\end{te}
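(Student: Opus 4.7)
The plan is to glue together, at each finite level $k$, the partial solutions provided by perfectness of the $a^j\cdot\Theta$, and then verify that the glued permutations realize $\Theta$ in the ultraproduct. For each $j$, invoke perfectness of $a^j\cdot\Theta$ to pick $p_k^{j,i}\in PP_{n_k}^{a^j_k}$ such that $\{p_k^{j,1},\ldots,p_k^{j,m}\}$ is a solution of $R$ in $PP_{n_k}^{a^j_k}$ for every $k\in\nz$ and $\tilde a^j\cdot\Theta(\hat x_i)=\Pi_{k\to\omega}p_k^{j,i}$. Because $\sum_j a^j=\id$ in $\Pi_k D_{n_k}$, the projections $\{a^j_k\}_j$ are pairwise orthogonal and (with only finitely many nonzero terms per fixed $k$) sum to $\id_{n_k}$. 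Define the candidate permutations at level $k$ by
\[
\tilde p_k^i:=\sum_j p_k^{j,i}\in M_{n_k}.
\]

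Two verifications are immediate at each level $k$. First, $\tilde p_k^i\in P_{n_k}$, since each $p_k^{j,i}$ permutes the block indicated by $a^j_k$ and these blocks partition $\{1,\ldots,n_k\}$ into orthogonal pieces. Second, for every $\xi\in R$, blockwise invariance gives
\[
\xi(\tilde p_k^1,\ldots,\tilde p_k^m)=\sum_j\xi(p_k^{j,1},\ldots,p_k^{j,m})=\sum_j a^j_k=\id_{n_k},
\]
where we used that $\{p_k^{j,1},\ldots,p_k^{j,m}\}$ is a solution of $R$ in $PP_{n_k}^{a^j_k}$ with unit $a^j_k$. Hence $\{\tilde p_k^1,\ldots,\tilde p_k^m\}$ is a solution of $R$ at every $k$.

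The main step, and the main obstacle, is to show $\Pi_{k\to\omega}\tilde p_k^i=\Theta(\hat x_i)$: the orthogonal decomposition at the matrix level must be transferred into the ultraproduct, where the index $j$ may range over an infinite set. Set $Q_i:=\Pi_{k\to\omega}\tilde p_k^i$ and $D_i:=Q_i-\Theta(\hat x_i)$. Orthogonality of supports gives $a^j_k\tilde p_k^i=p_k^{j,i}=\tilde p_k^i a^j_k$ for every $j,k$; passing to the ultralimit and combining $\tilde a^j\cdot\Theta=\Theta\cdot\tilde a^j$ with $\tilde a^j\cdot\Theta(\hat x_i)=\Pi_{k\to\omega}p_k^{j,i}$ yields $\tilde a^j Q_i=\tilde a^j\Theta(\hat x_i)$, so $\tilde a^j D_i=0$ for every $j$. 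Reading the identity $\sum_j a^j=\id$ as $\|\cdot\|_2$-convergence of orthogonal projections inside the tracial von Neumann algebra $\Pi_{k\to\omega}D_{n_k}$, normality and faithfulness of $Tr$ give
\[
\|D_i\|_2^2=\lim_{N\to\infty}\sum_{j\leqslant N}\|\tilde a^j D_i\|_2^2=0,
\]
so $D_i=0$. Hence $\Pi_{k\to\omega}\tilde p_k^i=\Theta(\hat x_i)$, which exhibits the $\tilde p_k^i$ as witnesses to the perfectness of $\Theta$.
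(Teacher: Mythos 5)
Your construction is exactly the paper's: sum the blockwise partial solutions $p_k^{j,i}$ over $j$ to obtain genuine permutations $\tilde p_k^i$ that solve $R$ at every level $k$. Those verifications are correct, and you go further than the paper's own (very terse) proof by actually arguing why $\Pi_{k\to\omega}\tilde p_k^i=\Theta(\hat x_i)$ rather than just asserting it; the derivation of $\tilde a^j D_i=0$ for every $j$ is sound.

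The gap is in the last display. The hypothesis $\sum_j a^j=\id$ is an identity in the Cartesian product $\Pi_k D_{n_k}$, i.e.\ it says $\sum_j a_k^j=\id_{n_k}$ for every $k$ (this is how the paper itself reads it). Your final computation needs the strictly stronger statement that $\sum_j\tilde a^j=\id$ with $\|\cdot\|_2$-convergence in the tracial ultraproduct $\Pi_{k\to\omega}D_{n_k}$, i.e.\ $\|\id-\sum_{j\leqslant N}\tilde a^j\|_2\to0$; otherwise $\|D_i\|_2^2$ is not exhausted by $\sum_j\|\tilde a^jD_i\|_2^2$. The two conditions are not equivalent, because the quotient map onto the ultraproduct does not commute with infinite sums: trace can escape to infinity in $j$. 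Concretely, taking $a_k^j=E_{jj}$ (the $j$-th diagonal matrix unit) for $j\leqslant n_k$ and $a_k^j=0$ otherwise gives $\sum_j a_k^j=\id_{n_k}$ for every $k$ while $\tilde a^j=0$ for every $j$; then each $a^j\cdot\Theta$ is a perfect morphism into the trivial group $\Pi_{k\to\omega}PP_{n_k}^{E_{jj}}$, the relations $\tilde a^jD_i=0$ carry no information, and the conclusion genuinely fails, e.g.\ for an injective sofic representation of a sofic, non-residually-finite group (Theorem~\ref{stab_resfin}). So the missing ingredient is the ultraproduct identity $\sum_j\tilde a^j=\id$, which is precisely what the Lemma preceding Theorem~\ref{thm:Z} supplies in the only application (there the $a_k^j$ are lifted from projections $c^j$ with $\sum_jc^j=\id$ in $\Pi_{k\to\omega}D_{n_k}$). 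With that hypothesis made explicit your argument closes; without it, neither your proof nor the paper's one-line assertion $\Theta(\hat x_i)=\Pi_{k\to\omega}p_k^i$ can be justified.
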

\begin{proof}
Let $a^j=\Pi_ka_k^j$. Then $\sum_ja^j=\id$ is equivalent to $\sum_ja_k^j=\id_{n_k}$ for each $k\in\nz$. Since $a^j\cdot\Theta$ is a perfect partial sofic morphism, there exist 
$p_k^{j,i}\in PP_{n_k}^{a_k^j}$, $i=1,\ldots, m$, such that
$\{p_k^{j,1},\ldots,p_k^{j,m}\}$ is a solution of $R$, such that $(p_k^{j,i})^*p_k^{j,i}=a_k^j$ and $a^j\cdot\Theta(\hat x_i)=\Pi_{k\to\omega}p_k^{j,i}$.
Define $p_k^i=\sum_jp_k^{j,i}$. Then $p_k^i\in P_{n_k}$ and $\{p_k^1,\ldots,p_k^m\}$ is a solution of $R$ for every $k\in\nz$. Moreover, 
$\Theta(\hat x_i)=\Pi_{k\to\omega}p_k^i$. It follows that $\Theta$ is perfect.
\end{proof}

\section{The commutator}

\begin{p}\label{rf}
Let $G=\ff_m/\langle R\rangle$ be a residually finite group. Then there exists $\Theta\colon G\to\Pi_{k\to\omega}P_{n_k}$ a perfect sofic representation for any given sequence $\{n_k\}_k\subseteq\nz^*$ such that $n_k\to\infty$ as $k\to\infty$. 
\end{p}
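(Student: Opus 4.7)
The plan is to build, for each $k$, an honest group homomorphism $\theta_k\colon G\to P_{n_k}$ via finite quotients, and then define $\Theta$ as the ultraproduct, $\Theta(\hat x_i)=\Pi_{k\to\omega}\theta_k(\hat x_i)$. Perfectness will then be automatic: since each $\theta_k$ is a genuine group homomorphism, $\{\theta_k(\hat x_1),\ldots,\theta_k(\hat x_m)\}$ solves $R$ inside $P_{n_k}$ for every $k$. The remaining task is to arrange that $Tr(\Theta(g))=0$ for every $g\neq e$, which will make $\Theta$ a sofic representation (and in particular injective).

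Since $R$ is finite, $G$ is finitely presented and hence countable; I would enumerate $G\setminus\{e\}=\{g_1,g_2,\ldots\}$. Using residual finiteness, I pick a decreasing sequence of finite-index normal subgroups $N_1\supseteq N_2\supseteq\cdots$ with $g_i\notin N_j$ for all $i\leqslant j$; concretely, $N_j$ is the intersection of $j$ finite-index normal subgroups each separating a single $g_i$ from $e$. Writing $q_j=[G:N_j]$, the sequence $(q_j)$ is non-decreasing, and for every fixed $g\neq e$ one has $g\notin N_j$ once $j$ is large enough.

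For each $k$ set $j(k)=\max\{j:q_j\leqslant\sqrt{n_k}\}$, so that $j(k)\to\infty$ as $k\to\infty$. Writing $n_k=s_kq_{j(k)}+r_k$ with $0\leqslant r_k<q_{j(k)}$, I define $\theta_k\colon G\to Sym(n_k)$ as the direct sum of $s_k$ copies of the left regular representation $G\twoheadrightarrow G/N_{j(k)}\hookrightarrow Sym(q_{j(k)})$ together with the trivial representation on the remaining $r_k$ points. Because the left regular representation of a finite group is fixed-point-free outside the identity, $Tr(\theta_k(g))=r_k/n_k\leqslant 1/\sqrt{n_k}$ whenever $g\notin N_{j(k)}$, while $\theta_k(g)=\id_{n_k}$ when $g\in N_{j(k)}$.

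The ultraproduct $\Theta(\hat x_i)=\Pi_{k\to\omega}\theta_k(\hat x_i)$ is then a well-defined homomorphism $G\to\Pi_{k\to\omega}P_{n_k}$, and it is perfect by construction. For fixed $g\neq e$ one has $g\notin N_{j(k)}$ for all sufficiently large $k$ (since $j(k)\to\infty$), so $Tr(\theta_k(g))\leqslant 1/\sqrt{n_k}\to 0$ and hence $Tr(\Theta(g))=0$, so $\Theta$ is a sofic representation. The one delicate point is the balancing act encoded in the choice of $j(k)$: we need $j(k)\to\infty$ so that every non-trivial $g\in G$ is eventually detected by $N_{j(k)}$, while simultaneously $q_{j(k)}$ must stay small compared to $n_k$ so that the trivial padding contributes vanishing trace. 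Capping $q_{j(k)}$ by $\sqrt{n_k}$ (any $o(n_k)$ cut-off would do) accommodates both requirements no matter how slowly the given sequence $n_k$ diverges.
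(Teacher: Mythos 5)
Your proof is correct and takes essentially the same route as the paper's: both build genuine homomorphisms to finite quotients, pad/amplify them by copies of the regular representation to fit the prescribed dimensions $n_k$, and check that the identity padding contributes vanishing trace (your $\sqrt{n_k}$ cut-off plays exactly the role of the paper's condition $n_k>j\cdot m_j$). The only cosmetic quibble is that $\max\{j:q_j\leqslant\sqrt{n_k}\}$ need not exist when the indices $q_j$ stabilize (i.e.\ when $G$ is finite), a degenerate case that is trivially handled.
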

\begin{proof}
Choose a decreasing sequence of finite index subgroups of $G$ with trivial intersection. Denote by $\{m_j\}_j$ the sequence of finite indices. Let $\Psi\colon G\hookrightarrow\Pi_{j\to\omega}P_{m_j}$ be the associated sofic representation. It is clearly perfect. So, $\Psi(g)=\Pi_{j\to\omega}p_j^g$, with $\{p_j^{\hat x_1}, \ldots, p_j^{\hat x_m}\}$ a solution of $R$ for every $j$.

We construct a carefully chosen amplification of $\Psi$ that fits our sequence of dimensions. Using $n_k\to\infty$ as $k\to \infty$, we construct an increasing sequence $\{i_j\}_j$ such that $n_k>j\cdot m_j$ for any $k>i_j$. Next, for each $k$, $i_j<k\leqslant i_{j+1}$, let $n_k=c_km_j+r_k$, with $r_k<m_j$ and $c_k\geqslant j$. For each $g\in G$ and such $k$, construct $q_k^g=p_j^g\otimes \id_{c_k}\oplus \id_{r_k}\in P_{n_k}$. Then, if $p_j^g$ is not the identity (and, hence, with no fixed points as it  corresponds to the non-trivial left translation action on the finite quotient of $G$ of index $m_j$):
\[Tr(q_k^g)=\frac{r_k}{n_k}<\frac{m_j}{j\cdot m_j}=\frac1j.\]
It follows that $Tr(q_k^g)\to0$  as $k\to \infty$ for any $g\in G$. Define $\Theta\colon G\to\Pi_{k\to\omega}P_{n_k}$ by $\Theta(g)=\Pi_{k\to\omega}q_k^g$. Then $\Theta$ is a perfect sofic representation
for the sequence  $\{n_k\}_k\subseteq\nz^*$.
\end{proof}

\begin{cor}\label{cor:p}
Let $G=\ff_m/\langle R\rangle$ be an amenable, residually finite group. Then any sofic representation of $G$ is perfect.
\end{cor}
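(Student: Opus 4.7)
The plan is to combine Proposition~\ref{rf} with Theorem~\ref{thm:ez2} of Elek and Szab\'o. Let $\Psi\colon G\hookrightarrow\Pi_{k\to\omega}P_{n_k}$ be an arbitrary sofic representation of $G$, realised on a specific sequence $\{n_k\}_k\subseteq\nz^*$ with $n_k\to\infty$. First, I would invoke Proposition~\ref{rf} on this \emph{same} sequence $\{n_k\}_k$ to produce a \emph{perfect} sofic representation $\Theta\colon G\hookrightarrow\Pi_{k\to\omega}P_{n_k}$, together with permutations $p_k^i\in P_{n_k}$, $i=1,\dots,m$, such that $\{p_k^1,\ldots,p_k^m\}$ is a solution of $R$ for every $k$ and $\Theta(\hat x_i)=\Pi_{k\to\omega}p_k^i$. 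The point of matching the sequence is that both $\Psi$ and $\Theta$ then live in the \emph{same} tracial ultraproduct.

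Next, since $G$ is finitely generated (being presented by $\ff_m/\langle R\rangle$ with $R$ finite) and amenable, Theorem~\ref{thm:ez2} applies: the two sofic representations $\Psi$ and $\Theta$ are conjugate. Hence there exists $p\in \Pi_{k\to\omega}P_{n_k}$ with
\[\Psi(g)= p\,\Theta(g)\,p^{-1}\qquad\text{for every }g\in G.\]
Pick any lift $(p_k)_k\in\Pi_k P_{n_k}$ of $p$, and set
\[q_k^i:= p_k\, p_k^i\, p_k^{-1}\in P_{n_k},\qquad i=1,\dots,m,\ k\in\nz.\]

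It remains to check the two defining properties of perfectness for the family $\{q_k^i\}$. For the relations: since $\xi(\,\cdot\,)$ is a word on the generators and conjugation in $P_{n_k}$ is a group automorphism,
\[\xi(q_k^1,\dots,q_k^m)= p_k\,\xi(p_k^1,\dots,p_k^m)\,p_k^{-1}= p_k\,\id_{n_k}\,p_k^{-1}=\id_{n_k}\]
for every $\xi\in R$ and every $k$. For the ultraproduct identification: taking the class in $\Pi_{k\to\omega}P_{n_k}$ gives $\Pi_{k\to\omega}q_k^i = p\,\Theta(\hat x_i)\,p^{-1}=\Psi(\hat x_i)$. Thus $\{q_k^1,\dots,q_k^m\}$ witnesses that $\Psi$ is perfect.

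There is no serious obstacle once Proposition~\ref{rf} and Theorem~\ref{thm:ez2} are in hand; the only care needed is the choice of the sequence $\{n_k\}$ (the one of the given $\Psi$, so that Theorem~\ref{thm:ez2} can conjugate $\Theta$ to $\Psi$) and the harmless observation that a conjugate of a solution of $R$ by a fixed permutation is again a solution of $R$ at each finite level $k$, before passing to the limit along $\omega$.
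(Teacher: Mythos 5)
Your proposal is correct and follows essentially the same route as the paper: invoke Proposition~\ref{rf} on the given sequence $\{n_k\}_k$ to get a perfect sofic representation, conjugate it to the given one via Theorem~\ref{thm:ez2}, and observe that perfectness is preserved under conjugation (a step the paper declares ``clear'' and you usefully spell out by lifting the conjugator to each finite level).
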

\begin{proof}
Let $\Theta\colon G\hookrightarrow\Pi_{k\to\omega}P_{n_k}$ be a sofic representation. Use the previous proposition to construct a perfect sofic representation $\Psi\colon G\hookrightarrow\Pi_{k\to\omega}P_{n_k}$. By the Elek-Szab\'o theorem~\cite{El-Sz2}*{Theorem 2}, two sofic representations of an amenable group are conjugate. Hence, $\Theta$ and $\Psi$ are conjugate. It is clear that a sofic representation conjugated to a perfect one is perfect.
\end{proof}

The next theorem is crucial. To prove it we use the following technical result.

\begin{lemma}
Let $\{c^j\}_j\subseteq\pp(\Pi_{k\to\omega}D_{n_k})$ be a sequence of projections such that $\sum_jc^j=\id$. Then there exist projections $a_k^j\in \pp(D_{n_k})$ such that $c^j=\Pi_{k\to\omega}a_k^j$ and $\sum_ja_k^j=\id_{n_k}$ for each $k\in\nz$.
\end{lemma}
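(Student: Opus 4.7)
The plan is to lift each $c^j$ by an arbitrary diagonal projection $b_k^j$, to use the orthogonality $c^ic^j=0$ (automatic in a commutative algebra once the $c^j$'s sum to $\id$) to conclude that these lifts are \emph{approximately} disjoint at each level $k$, and then to perform a greedy disjointification level-by-level, letting a single ``residual'' piece absorb the unavoidable but shrinking defect. The cutoff where the residual is placed will have to grow with $k$, since the countable family $\{c^j\}_j$ cannot be handled by one finite stage uniformly in $k$.

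Concretely, first I would lift each $c^j$ to $b_k^j\in\pp(D_{n_k})$ with $c^j=\Pi_{k\to\omega}b_k^j$, identify $b_k^j$ with a subset $B_k^j\subseteq\{1,\ldots,n_k\}$, and record that for $i\neq j$
\[
\lim_{k\to\omega}\frac{|B_k^i\cap B_k^j|}{n_k}=\lim_{k\to\omega}Tr(b_k^i b_k^j)=Tr(c^i c^j)=0.
\]

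Next, I would fix any sequence $J_k\to\infty$ (say $J_k:=k$) and define the level-$k$ partition
\[
A_k^1:=B_k^1,\qquad A_k^j:=B_k^j\setminus\bigcup_{l<j}A_k^l\;\text{ for }1<j<J_k,
\]
\[
A_k^{J_k}:=\{1,\ldots,n_k\}\setminus\bigcup_{l<J_k}A_k^l,\qquad A_k^j:=\emptyset\;\text{ for }j>J_k,
\]
and take $a_k^j\in\pp(D_{n_k})$ to be the diagonal projection supported on $A_k^j$. By construction $\{A_k^j\}_j$ partitions $\{1,\ldots,n_k\}$, so $\sum_ja_k^j=\id_{n_k}$ for every $k$.

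Verifying $\Pi_{k\to\omega}a_k^j=c^j$ for each fixed $j$ then reduces to a short computation on the $\omega$-large set $\{k:J_k>j\}$: on it $A_k^l\subseteq B_k^l$ for every $l<j$, so $B_k^j\setminus A_k^j\subseteq\bigcup_{l<j}(B_k^l\cap B_k^j)$, whose normalized cardinality is dominated by $\sum_{l<j}|B_k^l\cap B_k^j|/n_k$ and hence tends to $0$ along $\omega$ by the orthogonality estimate. The only real obstacle this approach has to overcome is the tension between the countably many ultraproduct conditions $\Pi_{k\to\omega}a_k^j=c^j$ and the finite-dimensional partition constraint $\sum_ja_k^j=\id_{n_k}$ at each level $k$; the growing cutoff $J_k$ resolves it by confining the residual step to the $\omega$-negligible set $\{k:J_k\leqslant j\}$ for each fixed $j$.
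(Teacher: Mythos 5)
Your proof is correct and follows essentially the same route as the paper's: establish the pairwise orthogonality $c^ic^j=0$ from $\sum_jc^j=\id$, lift each $c^j$ to level-$k$ diagonal projections, and disjointify greedily in the index $j$. The only difference is the bookkeeping for the leftover set --- the paper absorbs $\id_{n_k}-\sum_ja_k^j$ into $a_k^1$ after checking it is $\omega$-negligible, whereas you park it at a growing cutoff index $J_k$, which sidesteps that check since $\{k:J_k\leqslant j\}$ is finite for each fixed $j$.
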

\begin{proof}
The equation $\sum_jc^j=\id$ implies that $c^s=c^s+\sum_{j\neq s}c^sc^j$ for each $s\in\nz$. Since $\Pi_{k\to\omega}D_{n_k}$ is an abelian von Neumann algebra, it follows that $c^sc^j$ is a positive element. Then $c^sc^j$ must be equal to $0$ for $s\neq j$.

It is easy to check, using functional calculus, that every projection in an ultraproduct is an ultraproduct of projections. Then there exists $c_k^j\in\pp(D_{n_k})$ such that $c^j=\Pi_{k\to\omega}c_k^j$. For each $k\in\nz$,  we define inductively $a_k^j=c_k^j\cdot(\id_{n_k}-\sum_{s<j}a_k^s)$. Then $a_k^1=c_k^1$, so $\Pi_{k\to\omega}a_k^1=c^1$. By induction, $\Pi_{k\to\omega}a_k^j=\Pi_{k\to\omega}[c_k^j\cdot(\id_{n_k}-\sum_{s<j}a_k^s)]=c^j\cdot(\id-\sum_{s<j}c^s)=c^j$. 

Projections $\{a_k^j\}_j\subseteq \pp(D_{n_k})$ are diagonal matrices with only $0$ and $1$ entries. Moreover, by construction, $a_k^sa_k^j=0$ for  $s\neq j$. It follows that the $1$ entries in these matrices do not overlap. Then, for every $s$, $\sum_j a_k^j\geqslant\sum_{j<s}a_k^j$, so $\Pi_{k\to\omega}(\sum_j a_k^j)\geqslant \Pi_{k\to\omega}(\sum_{j<s}a_k^j)=\sum_{j<s}(\Pi_{k\to\omega}a_k^j)$. It follows that $\Pi_{k\to\omega}(\sum_j a_k^j)\geqslant\sum_j(\Pi_{k\to\omega}a_k^j)=\sum_j c^j=\id$, so $\Pi_{k\to\omega}(\sum_j a_k^j)=\id$.

Let $b_k=\id_{n_k}-\sum_ja_k^j$. Then $b_k\in\pp(D_{n_k})$ and $\Pi_{k\to\omega}b_k=\Pi_{k\to\omega}(\id_{n_k}-\sum_ja_k^j)=\id-\id=0$. We replace $a_k^1$ by $a_k^1+b_k$, for every $k$, to finish the proof.
\end{proof}

\begin{te}\label{thm:Z}
Let $\Theta\colon \zz^n\to\Pi_{k\to\omega}P_{n_k}$ be a group homomorphism. Then $\Theta$ is perfect.
\end{te}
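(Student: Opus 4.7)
The plan is to induce an action of $\zz^n$ on the Loeb measure space and decompose it according to stabilizers. First I would use Section~\ref{universal sofic action} to equip $(X_\omega,\mu_\omega)$ with the $\zz^n$-action induced by $\Theta$. For every subgroup $N\leqslant \zz^n$, set
$$X_N=\bigcap_{g\in N}\{x\in X_\omega:\Theta(g)x=x\}\cap\bigcap_{g\notin N}\{x\in X_\omega:\Theta(g)x\neq x\}.$$
Since $\zz^n$ has countably many subgroups and only countable set operations are required, each $X_N$ is measurable. Commutativity of $\zz^n$ implies that $X_N$ is $\zz^n$-invariant, and the sets $\{X_N\}_N$ form a countable measurable partition of $X_\omega$. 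Let $c^N\in\pp(\Pi_{k\to\omega}D_{n_k})$ be the projection corresponding to the indicator of $X_N$ under $L^\infty(X_\omega,\mu_\omega)\simeq\Pi_{k\to\omega}D_{n_k}$. Then the $c^N$ commute with $\Theta$ and satisfy $\sum_Nc^N=\id$. Apply the preceding lemma to lift $\{c^N\}$ to projections $a^N=\Pi_ka_k^N\in\pp(\Pi_kD_{n_k})$ with $\sum_Na_k^N=\id_{n_k}$ for every $k\in\nz$.

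Next I would analyze each partial piece separately. By construction, the subgroup $N$ acts trivially on $X_N$, so $a^N\cdot\Theta$ factors through the quotient as a partial sofic morphism $\bar\Theta_N\colon \zz^n/N\to\Pi_{k\to\omega}PP_{n_k}^{a_k^N}$. Freeness of the $\zz^n/N$-action on $X_N$ yields $Tr(\bar\Theta_N(g+N))=0$ for every $g+N\neq 0$, so $\bar\Theta_N$ is in fact a partial sofic representation of $\zz^n/N$. The key point is that $\zz^n/N$ is a finitely generated abelian group, hence amenable and residually finite.

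The heart of the argument is to show that each $\bar\Theta_N$ is perfect. Identifying $PP_{n_k}^{a_k^N}\simeq P_{r_k^N}$ with $r_k^N=\mathrm{rank}(a_k^N)$, the partial sofic representation $\bar\Theta_N$ corresponds to a genuine sofic representation of $\zz^n/N$ into $\Pi_{k\to\omega}P_{r_k^N}$. When $\mu_\omega(X_N)=0$ the representation is trivial and nothing is to do; otherwise $r_k^N\to\infty$ along $\omega$, so Proposition~\ref{rf} supplies a perfect sofic representation on this dimension sequence, and Corollary~\ref{cor:p} (using that $\zz^n/N$ is amenable and residually finite) implies that every sofic representation of $\zz^n/N$ is conjugate to it and hence perfect. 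This produces partial permutations $p_k^{N,i}\in PP_{n_k}^{a_k^N}$, commuting for every $k$, such that $a^N\cdot\Theta(\hat x_i)=\Pi_{k\to\omega}p_k^{N,i}$. Finally, Theorem~\ref{partial perfect} assembles these pieces into a single perfect lift for $\Theta$, which is what we want.

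The main obstacle is the third step: reconciling the partial setting with Proposition~\ref{rf} and Corollary~\ref{cor:p}, which are stated for full sofic representations on a sequence with $n_k\to\infty$. One must argue carefully that $r_k^N\to\infty$ along $\omega$ is enough, defining the $p_k^{N,i}$ arbitrarily on the $\omega$-negligible set of indices where $r_k^N$ is small, so that the genuine commutation relation holds for every $k$, as required by the definition of perfectness and by Theorem~\ref{partial perfect}.
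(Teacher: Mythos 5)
Your proposal is correct and follows essentially the same route as the paper: decompose the Loeb space by stabilizer subgroups $A_H=\{x:Stab(x)=H\}$, lift the corresponding projections via the preceding lemma, recognize each cut-down $a^H\cdot\Theta$ as a partial sofic representation of the amenable, residually finite quotient $\zz^n/H$, invoke Corollary~\ref{cor:p}, and reassemble with Theorem~\ref{partial perfect}. Your extra care about identifying $PP_{n_k}^{a_k^N}$ with $P_{r_k^N}$ and handling the cases where $r_k^N$ stays bounded or $\mu_\omega(X_N)=0$ addresses a point the paper passes over silently, but it is a refinement of the same argument rather than a different one.
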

\begin{proof}
Let $X_\omega$ be the Loeb measure space defined in Section \ref{Loeb space}, such that $L^\infty(X_\omega)\simeq\Pi_{k\to\omega}D_{n_k}$. Then $\Theta$ induces an action of $\zz^n$ on $X_\omega$, as explained in Section \ref{universal sofic action},
see also ~\cite{Pa}*{Sections 1.2 and 2.4}. For a subgroup $H\leqslant \zz^n$, we define:
\[A_H=\{x\in X_\omega: Stab(x)=H\}.\]
Let $c_H\in\Pi_{k\to\omega}D_{n_k}$ be the characteristic function of $A_H$. Then $\sum_Hc_H=\id$ and $\{A_H\}_{H\leqslant \zz^n}$ form a partition of $X_\omega$. By the previous lemma, we  find projections $a_k^H\in\pp(D_{n_k})$ such that: $\sum_Ha_k^H=\id_{n_k}$, for each $k$ and $c_H=\Pi_{k\to\omega}a_k^H$. Let $a^H=\Pi_ka_k^H\in\Pi_kD_{n_k}$, so that $\tilde a^H=c_H$.

As $H$ is a normal subgroup of $\zz^n$, it follows that $A_H$ is an invariant subset for the action induced by $\Theta$. That is, $\tilde a^H\Theta(g)=\Theta(g)\tilde a^H$ for every $g\in\zz^n$.  Thus, we can construct a partial sofic morphism $a^H\cdot\Theta\colon G\to\Pi_{k\to\omega}PP_{n_k}^{a_k^H}$.

For $h\in H$ we have $a^H\cdot\Theta(h)=a^H\cdot\Theta(e)=\tilde a^H$. Also, for $h\in G\setminus H$ we have $Tr(a^H\cdot\Theta(h))=0$. Then $a^H\cdot\Theta$ is a partial sofic representation of the quotient $\zz^2/H$. This group is an amenable, residually finite group. Hence, by Corollary~\ref{cor:p}, $a^H\cdot\Theta$ is perfect. As $\sum_Ha^H=\id$, it follows by Theorem \ref{partial perfect} that $\Theta$ is perfect.
\end{proof}

The following 2 consequences give our Main Theorem from the Introduction.

\begin{cor}
 The commutator is stable in permutations endowed with the Hamming distance.
 Moreover, for any given $k\geqslant 2$, every $k$ permutations that almost commute are near $k$ commuting permutations.
 \end{cor}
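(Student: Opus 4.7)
The plan is direct: combine the characterization of stability from Theorem~\ref{car_te} with Theorem~\ref{thm:Z}, which does all the real work.

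For a given $k \geqslant 2$, take the relator set $R = \{[x_i, x_j] : 1 \leqslant i < j \leqslant k\} \subseteq \ff_k$. The corresponding quotient $\ff_k/\langle R\rangle$ is the free abelian group $\zz^k$. By construction, a tuple $p_1,\ldots,p_k \in Sym(n)$ is a solution of $R$ if and only if the $p_i$ pairwise commute, and it is a $\delta$-solution if and only if $d_H([p_i,p_j],\id_n) < \delta$ for all $i<j$ (i.e.\ the $p_i$ ``almost commute''). So the statement we must prove is exactly that this $R$ is stable, for every $k \geqslant 2$.

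By Theorem~\ref{car_te}, the stability of $R$ is equivalent to the assertion that every group homomorphism $\Theta\colon \zz^k \to \Pi_{k\to\omega}P_{n_k}$ is perfect. But this is precisely the content of Theorem~\ref{thm:Z} (applied with $n=k$). Unpacking the definition of stability then gives exactly the quantitative statement: for every $\vp > 0$ there is $\delta > 0$ such that any $\delta$-solution $p_1,\ldots,p_k$ can be perturbed to commuting $\tilde p_1,\ldots,\tilde p_k$ with $d_H(p_i,\tilde p_i) < \vp$ for all $i$. The case $k=2$ gives the first sentence of the corollary, and general $k \geqslant 2$ gives the second.

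There is no real obstacle left to overcome: all the hard work has been absorbed into Theorem~\ref{thm:Z}, whose proof decomposed the Loeb space $X_\omega$ according to stabilizer subgroups $H \leqslant \zz^k$ of the induced action, used Corollary~\ref{cor:p} to perfect each partial sofic representation of the amenable residually finite quotient $\zz^k/H$, and then reassembled the pieces via Theorem~\ref{partial perfect}. The corollary is essentially a translation of that perfectness result back into the concrete language of permutations.
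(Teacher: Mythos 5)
Your proposal is correct and is essentially identical to the paper's own proof, which likewise deduces the corollary by combining Theorem~\ref{thm:Z} with the characterization of stability in Theorem~\ref{car_te}; you have merely spelled out the (routine) identification of the commutator relators with the presentation of $\zz^k$ and the translation of solution/$\delta$-solution into commuting/almost commuting, which the paper leaves implicit.
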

\begin{proof}
This follows from the preceding theorem together with Theorem \ref{car_te}.
\end{proof}

\begin{cor}
 The commutator is stable in even permutations endowed with the Hamming distance.
 Moreover, for any given $k\geqslant 2$, every $k$ even permutations that almost commute are near $k$ even commuting permutations.
 \end{cor}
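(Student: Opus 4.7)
The plan is to propagate the adjective ``even'' through the entire chain of results that yielded the preceding corollary: Proposition~\ref{rf}, Corollary~\ref{cor:p}, Theorem~\ref{partial perfect}, Theorem~\ref{thm:Z}, and Theorem~\ref{car_te}. Two closure properties make this propagation automatic almost everywhere: conjugation inside $P_{n_k}$ preserves evenness since $Alt(n_k)$ is normal in $P_{n_k}$, and direct sums of even permutations on pairwise disjoint supports remain even. The only construction that needs a conscious modification is the amplification in Proposition~\ref{rf}.

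I would first prove an even version of Proposition~\ref{rf} by writing $n_k=2c_k m_j+r_k$ with $r_k<2m_j$ and $c_k\geqslant j$ (achievable for $k$ large enough since $n_k\to\infty$), and setting $q_k^g=p_j^g\otimes\id_{2c_k}\oplus\id_{r_k}$. Doubling the multiplicity forces $p_j^g\otimes\id_{2c_k}$ to be a product of an even number of cycles of each length, so $q_k^g\in Alt(n_k)$; for nontrivial $g$ the estimate $Tr(q_k^g)=r_k/n_k<2m_j/(2jm_j)=1/j$ still yields a perfect sofic representation with image in $\Pi_{k\to\omega}Alt(n_k)$. The even version of Corollary~\ref{cor:p} then drops out of Theorem~\ref{thm:ez2}: any sofic representation whose image lies in $\Pi_{k\to\omega}Alt(n_k)$ is conjugate inside $\Pi_{k\to\omega}P_{n_k}$ to this even perfect representation, and its pointwise conjugates remain in $Alt(n_k)$ by normality.

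For the even version of Theorem~\ref{thm:Z}, I would repeat the stabilizer decomposition $\{A_H\}_{H\leqslant\zz^n}$ and the choice of projections $a_k^H$ verbatim. For each $H$ with $Tr(c_H)>0$, the partial sofic representation $a^H\cdot\Theta$ of the amenable residually finite group $\zz^n/H$ can be identified with an ordinary sofic representation into $\Pi_{k\to\omega}Sym(\mathrm{supp}(a_k^H))$ in dimensions $|\mathrm{supp}(a_k^H)|\to\infty$, and so admits exact even approximants on each support by the even Corollary~\ref{cor:p}. Assembled as in Theorem~\ref{partial perfect}, the global approximants $p_k^i=\sum_H p_k^{H,i}$ are direct sums of even permutations over pairwise disjoint supports and therefore lie in $Alt(n_k)$. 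The word-for-word even version of Theorem~\ref{car_te} then translates this into the stated stability of the commutator in even permutations for every $k\geqslant 2$.

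The sole real obstacle is the amplification step: the construction has no built-in parity control, and one must verify that the sofic trace estimate survives the doubling $c_k\mapsto 2c_k$. Once this is arranged, every closure property required afterward---conjugation, identification of the partial setting with an ordinary sofic setting on the support, and direct sum over disjoint supports---preserves evenness automatically, and the rest of the chain of adaptations requires no further work.
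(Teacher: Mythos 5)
Your proposal is correct and follows essentially the same route as the paper: the paper's own proof consists precisely of choosing an even multiplicity $c_k$ in the amplification of Proposition~\ref{rf} (your $c_k\mapsto 2c_k$ doubling) and then letting normality of $Alt(n_k)$ and disjoint-support direct sums carry evenness through Corollary~\ref{cor:p}, Theorem~\ref{thm:Z}, Theorem~\ref{partial perfect}, and Theorem~\ref{car_te}. Your identification of the amplification as the only step needing genuine modification matches the paper exactly; the only cosmetic remark is that your even version of Corollary~\ref{cor:p} need not be restricted to representations with image in $\Pi_{k\to\omega}Alt(n_k)$, since conjugating an even perfect representation yields even exact lifts for an arbitrary sofic representation (which is what the application to $a^H\cdot\Theta$ actually requires, and what gives the paper's closing observation that evenness of the input is not an extra hypothesis).
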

\begin{proof}
We proceed as above with a slight adaptation of details in the arguments. In the proof of Proposition~\ref{rf}, we
choose even numbers $c_k$. This ensures that the constructed permutations $q_g^k$ are even. 
Now Theorem~\ref{thm:Z} can be stated for
even permutations. This corollary then follows from this variant of Theorem~\ref{thm:Z} together with 
Theorem \ref{car_te}  restricted to even permutations.

We observe that having even permutations instead of arbitrary permutations as a $\delta$-solution is
not an extra hypothesis. Indeed, our proof above actually shows that every $k$ commuting permutations 
are near $k$  commuting even permutations.
\end{proof}

A careful analysis of the proof of Theorem~\ref{thm:Z} shows
that we have used the following (strong) properties of
$\mathbb{Z}^n$: (i) every subgroup is normal, (ii) every quotient is amenable and residually finite.
These properties are clearly satisfied by every quotient of $\mathbb{Z}^n$.

\begin{cor}
Every finitely generated abelian group is stable.
\end{cor}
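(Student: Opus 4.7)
The plan is to adapt the proof of Theorem~\ref{thm:Z} essentially verbatim, replacing $\zz^n$ with an arbitrary finitely generated abelian group $G$. Write $G=\ff_m/\langle R\rangle$. By Theorem~\ref{car_te} it suffices to prove that every group homomorphism $\Theta\colon G\to\Pi_{k\to\omega}P_{n_k}$ is perfect, so fix one such $\Theta$.

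First I would form the induced action of $G$ on the Loeb space $X_\omega$ and, for each subgroup $H\leqslant G$, consider the set $A_H=\{x\in X_\omega:\mathrm{Stab}(x)=H\}$ with characteristic function $c_H\in\Pi_{k\to\omega}D_{n_k}$. Because a finitely generated abelian group has only countably many subgroups, $\{A_H\}_H$ is a countable partition of $X_\omega$ and $\sum_H c_H=\id$. The lemma preceding Theorem~\ref{thm:Z} then yields projections $a_k^H\in\pp(D_{n_k})$ with $\sum_H a_k^H=\id_{n_k}$ for every $k$ and $c_H=\Pi_{k\to\omega}a_k^H$; set $a^H=\Pi_k a_k^H$. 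Since $G$ is abelian, every $H$ is normal, hence $A_H$ is $G$-invariant and $\tilde a^H$ commutes with $\Theta$. This produces a partial sofic morphism $a^H\cdot\Theta\colon G\to\Pi_{k\to\omega}PP_{n_k}^{a_k^H}$ which, exactly as in the proof of Theorem~\ref{thm:Z}, factors through a partial sofic representation of $G/H$. The quotient $G/H$ is again finitely generated abelian, hence both amenable and residually finite, so Corollary~\ref{cor:p} applies to show that $a^H\cdot\Theta$ is perfect. Theorem~\ref{partial perfect} then glues these perfect partial pieces together to conclude that $\Theta$ itself is perfect, and Theorem~\ref{car_te} gives the stability of $R$, i.e.\ of $G$.

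The hard part, if any, is conceptual rather than technical: one must recognise that the proof of Theorem~\ref{thm:Z} exploits only two features of $\zz^n$, namely normality of every subgroup (used to make $\tilde a^H$ commute with $\Theta$) and amenability together with residual finiteness of every quotient (needed to invoke Corollary~\ref{cor:p}). Both properties pass trivially to any finitely generated abelian group and to all of its quotients, which is exactly the observation recorded in the remark preceding this corollary, so no additional technical work is required.
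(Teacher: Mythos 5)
Your proposal is correct and follows exactly the route the paper intends: the paper offers no separate proof of this corollary, deriving it instead from the remark that the proof of Theorem~\ref{thm:Z} only uses that every subgroup of $\zz^n$ is normal and that every quotient is amenable and residually finite, properties shared by every finitely generated abelian group (each being a quotient of some $\zz^n$). Your additional observation that such a group has only countably many subgroups, so that the partition $\{A_H\}_H$ is countable and the lemma on projections applies, is a pertinent detail that the paper leaves implicit.
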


\section{Weak stability}

We notice that in Theorem \ref{stab_resfin}, the existence of a perfect sofic representation is enough to deduce that the group is residually finite.  Combined with our proof of Theorem \ref{car_te}, this suggests the following a priori weaker version of stability. Let $l\colon \ff_m\to\nz$ be the word length function.

\begin{de}\label{def:ws}

Permutations $p_1,p_2,\ldots,p_m\in Sym(n)$ are a $\delta$-\emph{strong solution} of $R$, for some $\delta>0$, if for every $\xi\in\ff_m$ such that $l(\xi)<1/\delta$ we have:
\begin{align*}
\xi\in \langle R\rangle&\Longrightarrow d_H(\xi(p_1,\ldots,p_m),\id_n)<\delta;\\
\xi\notin \langle R\rangle&\Longrightarrow d_H(\xi(p_1,\ldots,p_m),\id_n)>1-\delta.
\end{align*}
The system $R$ is called \emph{weakly stable}  (or \emph{weakly stable in permutations}) if $\forall\ \vp>0\ \exists\ \delta>0\ \forall n\in\nz^*$  $\forall\ p_1,p_2,\ldots,p_m\in Sym(n)$ a $\delta$-strong solution of $R$ there exist $\tilde p_1,\ldots,\tilde p_m\in Sym(n)$ a solution of $R$ such that $d_H(p_i,\tilde p_i)<\vp$.

The group $G=\ff_m/\langle R\rangle$ is called \emph{weakly stable} if  its set  of relator words $R$ is stable.
\end{de}

Similarly to stability, the definition of weak stability does not depend on the particular choice of finite presentation of the group as Tietze transformations preserve weak stability. 

Comparing the definitions, it is clear that ``$R$ is stable" implies ``$R$ is weakly stable". It is hard, at this point, to say whether or not the converse is true, see also Conjecture \ref{conj:stable-weak stable}.

\begin{te}\label{thm:ws} Let $G=\ff_m/\langle R\rangle$ be a group.
\begin{enumerate}[(i)]
\item $G$ is weakly stable if and only if every sofic representation of $G$ is perfect.\label{car_te2}
\item If $G$ is sofic and $R$ is weakly stable, then $G$ is residually finite.\label{car_te3}
\item Suppose $G$ is amenable. Then $R$ is weakly stable if and only if $G$ is residually finite.\label{p}
\end{enumerate}
\end{te}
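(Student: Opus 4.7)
The plan is to follow the same pattern as Theorem \ref{car_te} and Theorem \ref{stab_resfin}, upgrading homomorphisms to sofic representations via the strong-solution condition.

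For part (\ref{car_te2}), the forward direction is a routine adaptation of the proof of Theorem \ref{car_te}. Given a sofic representation $\Theta\colon G\hookrightarrow \Pi_{k\to\omega}P_{n_k}$ with $\Theta(\hat x_i)=\Pi_{k\to\omega}q_k^i$, the homomorphism property gives $d_H(\xi(q_k^1,\ldots,q_k^m),\id)\to 0$ as $k\to\omega$ for every $\xi\in\langle R\rangle$, while the vanishing of the trace on nontrivial elements of $G$ gives $d_H(\xi(q_k^1,\ldots,q_k^m),\id)\to 1$ for $\xi\notin\langle R\rangle$. A standard diagonal argument then produces, for each $\delta>0$, an $\omega$-large set of indices $k$ for which $(q_k^1,\ldots,q_k^m)$ is a $\delta$-strong solution; weak stability supplies exact solutions $\tilde p_k^i\in Sym(n_k)$ close to $q_k^i$, and $\Theta(\hat x_i)=\Pi_{k\to\omega}\tilde p_k^i$ witnesses that $\Theta$ is perfect. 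For the converse direction, suppose $R$ is not weakly stable. Fix a witness $\vp>0$, a null sequence $\delta_k\to 0$, and $\delta_k$-strong solutions $(p_k^1,\ldots,p_k^m)\in Sym(n_k)^m$ each at Hamming distance at least $\vp$ from any honest solution. Set $\Theta(\hat x_i)=\Pi_{k\to\omega}p_k^i$: the ``$\xi\in\langle R\rangle$'' clause of the strong-solution condition, applied to arbitrary $\xi$ once $1/\delta_k$ exceeds $l(\xi)$, shows that $\Theta$ is a well-defined homomorphism from $G$; the ``$\xi\notin\langle R\rangle$'' clause shows that the trace vanishes on every nontrivial element, so $\Theta$ is a sofic representation. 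By hypothesis $\Theta$ should be perfect, but then the defining permutations $\tilde p_k^i$ would be an exact solution within Hamming distance $\vp/(2m)$ from $p_k^i$ for $\omega$-many $k$, contradicting the choice of the $p_k^i$.

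Part (\ref{car_te3}) is now immediate: if $G$ is sofic it admits at least one sofic representation $\Theta\colon G\hookrightarrow\Pi_{k\to\omega}P_{n_k}$; by (\ref{car_te2}) this $\Theta$ is perfect, and Theorem \ref{stab_resfin} then concludes that $G$ is residually finite.

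For part (\ref{p}), amenable groups are sofic, so the direction weak stability $\Rightarrow$ residually finite is a direct application of (\ref{car_te3}). Conversely, if $G$ is amenable and residually finite, Corollary \ref{cor:p} tells us that every sofic representation of $G$ is perfect, and (\ref{car_te2}) then gives weak stability of $R$.

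The main subtlety, and the only place where weak stability genuinely beats ordinary stability, sits in the converse of (\ref{car_te2}): one must verify that a sequence of $\delta_k$-strong solutions (with $\delta_k\to 0$) yields, in the ultraproduct, a map which is not merely a group homomorphism but an \emph{injective} one with vanishing trace on $G\setminus\{e\}$, i.e.\ a sofic representation. This is exactly what the two-sided inequality in Definition~\ref{def:ws} is engineered to supply, and the whole argument is arranged so that no further ingredient is needed beyond Theorem \ref{car_te}, Theorem \ref{stab_resfin}, Corollary \ref{cor:p}, and the fact that amenable groups are sofic.
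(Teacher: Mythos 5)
Your proposal is correct and follows exactly the route the paper intends: parts (i) and (ii) are obtained by adapting the proofs of Theorem \ref{car_te} and Theorem \ref{stab_resfin} (with the two-sided strong-solution condition supplying both the homomorphism property and the vanishing trace, hence a genuine sofic representation in the converse direction), and part (iii) combines (ii) with Corollary \ref{cor:p} and (i). The paper's own proof is just a two-line pointer to those earlier arguments, so your write-up is essentially the same proof with the details made explicit.
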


\begin{proof}
Proceeding as in the proofs of Theorems \ref{car_te} and \ref{stab_resfin}, we obtain (\ref{car_te2}) and (\ref{car_te3}). Let us check (\ref{p}). If $R$ is weakly stable then use (\ref{car_te3}). If $G$ is residually finite then use Corollary \ref{cor:p} and (\ref{car_te2}).
\end{proof}

\subsection{Examples of non (weakly) stable systems}\label{sec:ex}
 Our main theorem on the stability of the commutator is the first stability result in permutation matrices.
We list now other new examples of systems of relator words
which are  (not) (weakly) stable in permutation matrices as follows from our rigidity results, Theorem~\ref{thm:p} and Theorem~\ref{stab_resfin}.

\begin{ex}[Baumslag-Solitar groups]
These are groups defined by presentations $BS(m,n)=\langle a,t \mid t^{-1}a^mta^{-n}=1\rangle,$ where $m$ and $n$ are integers.
They are all sofic as they are known to be residually amenable. 
Also, $BS(m,n)$ is residually finite if and only if $\vert m\vert=\vert n\vert$ or $\vert m\vert=1$ or $\vert n\vert=1.$
Let $r(m,n)=t^{-1}a^mta^{-n}$ denotes the relator word. Using our results, we conclude that
\begin{itemize}
\item $r(m,n)$ is weakly stable in permutations whenever $\vert m\vert=1$ or $\vert n\vert=1.$
\item $r(m,n)$ is stable in permutations whenever $m= n=\pm 1.$
\item $r(m,n)$ is not stable in permutations whenever $\vert n\vert, \vert m\vert\geqslant 2$ and $\vert m\vert\not =\vert n\vert$.
\end{itemize}
\end{ex}

It is not yet known whether the fundamental group of the Klein bottle given by the presentation  $\langle a,t \mid t^{-1}ata=1\rangle\simeq BS(1,-1)\simeq BS(-1,1)$
is stable. For $\vert n\vert, \vert m\vert\geqslant 2$ and $\vert m\vert=\vert n\vert$, $BS(m,n)$ contains
a finite index subgroup isomorphic to $F_{\vert n\vert}\times \mathbb{Z}$,
the direct product of the free group of rank $\vert n\vert$ and the group of integers. The (weak) stability of the corresponding system of relator words
is an open question.


\begin{ex}[Sofic amalgamated products]
For $n\geqslant 3$ and $p$ prime, the amalgamated product
$
SL_n(\mathbb Z[{\textstyle\frac{1}{p}}])\ast_{\mathbb Z} SL_n(\mathbb Z[{\textstyle\frac{1}{p}}])
$
is finitely presented, sofic, and not residually amenable~\cite{Kar}. 
By Theorem~\ref{stab_resfin}, the finite system of relator words of this group is not stable in permutations.
\end{ex}


\begin{ex}[Kharlampovich's group]
This is a famous solvable group of class 3 which is finitely presented and has unsolvable word problem~\cite{Ha}.
In particular, it is an amenable finitely presented group which is not residually finite.
By Theorem~\ref{thm:p}, the finite system of relator words of this group is not weakly stable in permutations.
\end{ex}

\begin{bibdiv}
\begin{biblist}


\bib{Dav}{article}{
   author={Davidson, K. R.},
   title={Almost commuting Hermitian matrices},
   journal={Math. Scand.},
   volume={56},
   date={1985},
   number={2},
   pages={222--240},
}

\bib{El-Sz}{article}{
   author={Elek, G.},
   author={Szab{\'o}, E.},
   title={Hyperlinearity, essentially free actions and $L^2$-invariants.
   The sofic property},
   journal={Math. Ann.},
   volume={332},
   date={2005},
   number={2},
   pages={421--441},
}

\bib{El-Sz2}{article}{
   author={Elek, G.},
   author={Szab{\'o}, E.},
   title={Sofic representations of amenable groups},
   journal={Proc. Amer. Math. Soc.},
   volume={139},
   date={2011},
   number={12},
   pages={4285--4291},
}

\bib{El-Sze}{arx}{
   author={{Elek}, G. },
    author={{Szegedy}, B.},
    title ={Limits of hypergaphs, removal and regularity lemmas. A non-standard approach},
   note = {arXiv:0705.2179},
  }

\bib{EL}{article}{
   author={Exel, R.},
   author={Loring, T.},
   title={Almost commuting unitary matrices},
   journal={Proc. Amer. Math. Soc.},
   volume={106},
   date={1989},
   number={4},
   pages={913--915},
}

\bib{FK}{arx}{
   author={{Filonov}, N. },
    author={{Kachkovskiy}, I.},
    title ={A Hilbert-Schmidt analog of Huaxin Lin's Theorem},
   note = {arXiv:1008.4002},
  }

\bib{FS}{article}{
   author={Filonov, N.},
   author={Safarov, Y.},
   title={On the relation between an operator and its self-commutator},
   journal={J. Funct. Anal.},
   volume={260},
   date={2011},
   number={10},
   pages={2902--2932},
}

\bib{lev}{arx}{
   author = {{Glebsky}, L.},
    title={Almost commuting matrices with respect to normalized Hilbert-Schmidt norm},
   note = {arXiv:1002.3082},
}

\bib{Gl-Ri}{article}{
   author={Glebsky, L.},
   author={Rivera, L. M.},
   title={Almost solutions of equations in permutations},
   journal={Taiwanese J. Math.},
   volume={13},
   date={2009},
   number={2A},
   pages={493--500},
}

\bib{GS}{article}{
   author={Gorenstein, D.},
   author={Sandler, R.},
   author={Mills, W. H.},
   title={On almost-commuting permutations},
   journal={Pacific J. Math.},
   volume={12},
   date={1962},
   pages={913--923},
}



\bib{Ha}{article}{
   author={Harlampovi{\v{c}}, O. G.},
   title={A finitely presented solvable group with unsolvable word problem},
   language={Russian},
   journal={Izv. Akad. Nauk SSSR Ser. Mat.},
   volume={45},
   date={1981},
   number={4},
   pages={852--873, 928},
}


\bib{Lin_short}{article}{
   author={Friis, P.},
   author={R{\o}rdam, M.},
   title={Almost commuting self-adjoint matrices---a short proof of Huaxin
   Lin's theorem},
   journal={J. Reine Angew. Math.},
   volume={479},
   date={1996},
   pages={121--131},
}

\bib{Had1}{article}{
   author={Hadwin, D.},
   title={Free entropy and approximate equivalence in von Neumann algebras},
   conference={
      title={Operator algebras and operator theory},
      address={Shanghai},
      date={1997},
   },
   book={
      series={Contemp. Math.},
      volume={228},
      publisher={Amer. Math. Soc., Providence, RI},
   },
   date={1998},
   pages={111--131},
}

\bib{Had2}{article}{
   author={Hadwin, D.},
   author={Li, W.},
   title={A note on approximate liftings},
   journal={Oper. Matrices},
   volume={3},
   date={2009},
   number={1},
   pages={125--143},
}

\bib{Halmos}{article}{
   author={Halmos, P. R.},
   title={Some unsolved problems of unknown depth about operators on Hilbert
   space},
   journal={Proc. Roy. Soc. Edinburgh Sect. A},
   volume={76},
   date={1976/77},
   number={1},
   pages={67--76},
}

\bib{Kar}{arx}{
   author={Kar, A.}, 
   author={Nikolov, N.},
    title = {A non-LEA sofic group},
  note = {arXiv:1405.1620}
}
 
\bib{Lin}{article}{
   author={Lin, H.},
   title={Almost commuting self-adjoint matrices and applications},
   conference={
      title={Operator algebras and their applications (Waterloo, ON,
      1994/1995)},
   },
   book={
      series={Fields Inst. Commun.},
      volume={13},
      publisher={Amer. Math. Soc., Providence, RI},
   },
   date={1997},
   pages={193--233},
}

\bib{L}{article}{
   author={Loeb, P. A.},
   title={Conversion from nonstandard to standard measure spaces and
   applications in probability theory},
   journal={Trans. Amer. Math. Soc.},
   volume={211},
   date={1975},
   pages={113--122},
}

\bib{M}{article}{
   author={Mills, W. H.},
   title={An application of linear programming to permutation groups},
   journal={Pacific J. Math.},
   volume={13},
   date={1963},
   pages={197--213},
}
	
\bib{vonN}{article}{
   author={von Neumann, J.},
   title={Beweis des ergodensatzes und des $H$-theorems in der neuen mechanik},
   journal={Zeitschrift f\"ur Physik A Hadrons and Nuclei},
   volume={57},
   date={1929},
   pages={30--70},
   }

\bib{Pa}{article}{
   author={P{\u{a}}unescu, L.},
   title={A convex structure on sofic embeddings},
   journal={Ergodic Theory Dynam. Systems},
   volume={34},
   date={2014},
   number={4},
   pages={1343--1352},
}

\bib{Ros}{article}{
   author={Rosenthal, P.},
   title={Research problems: are almost commuting matrices near commuting
   matrices?},
   journal={Amer. Math. Monthly},
   volume={76},
   date={1969},
   number={8},
   pages={925--926},
}

\bib{Ulam}{book}{
   author={Ulam, S. M.},
   title={A collection of mathematical problems},
   series={Interscience Tracts in Pure and Applied Mathematics, no. 8},
   publisher={Interscience Publishers, New York-London},
   date={1960},
   pages={xiii+150},
}

\bib{Voi}{article}{
   author={Voiculescu, D.},
   title={Asymptotically commuting finite rank unitary operators without
   commuting approximants},
   journal={Acta Sci. Math. (Szeged)},
   volume={45},
   date={1983},
   number={1-4},
   pages={429--431},
}

\end{biblist}
\end{bibdiv}
\end{document}